\newtheorem{thm}{\bf Theorem}[section]
\newtheorem{theoremx}{\bf Theorem}
\newtheorem{definitionx}[theoremx]{\bf Definition}
\newtheorem{conjecturex}[theoremx]{\bf Conjecture}
\newtheorem{lemma}[thm]{\bf Lemma}
\newtheorem{cor}[thm]{\bf Corollary}
\theoremstyle{definition}
\newtheorem{definition}[thm]{\bf Definition}
\theoremstyle{remark}
\newtheorem{remark}[thm]{\bf Remark}
\newtheorem{example}[thm]{\bf Example}
\newtheorem{notation}[thm]{\bf Notation}
\newtheorem{problem}[thm]{\bf Problem}
\numberwithin{equation}{section}
\newcommand{\HH}[3]{\operatorname{H}^{#1}_{#2}\left(#3\right)}
\DeclareMathOperator{\height}{{ht}}
\DeclareMathOperator{\depth}{{depth}}
\DeclareMathOperator{\Min}{{Min}}
\DeclareMathOperator{\Hom}{{Hom}}
\DeclareMathOperator{\reg}{{reg}}
\DeclareMathOperator{\gr}{{gr}}
\def\ls{\leqslant}
\def\gs{\geqslant}
\def\f0{\mathbf{0}}
\def\f1{\mathbf{1}}
\def\fv{\mathbf{v}}
\def\fx{\mathbf{x}}
\def\fy{\mathbf{y}}
\def\fb{\mathbf{b}}
\def\fc{\mathbf{c}}
\def\fp{\mathbf{p}}
\def\fa{\mathbf{a}}
\def\fc{\mathbf{c}}
\def\fm{\mathfrak{m}}
\def\fp{\mathfrak{p}}
\def \cJ{\mathcal J}
\def \QQ{\mathbb Q}
\def \RR{\mathbb R}
\def \FF{\mathbb F}
\def \ZZ{\mathbb Z}
\def \C{\mathcal C}
\def \R{\mathcal R}
\def \cJ{\mathcal J}
\begin{document}

\title[Splittings and Symbolic Powers of  square-free monomial Ideals]{Splittings and  Symbolic powers of square-free monomial Ideals}

\author[Jonathan Monta\~no]{Jonathan Monta\~no$^{*}$}
\address{Jonathan Monta\~no \\ Department of Mathematical Sciences  \\ New Mexico State University  \\PO Box 30001\\Las Cruces, NM 88003-8001}
\thanks{$^{*}$ The first author was supported by a Simons Travel Grant from the Simons Foundation and AMS.}
\email{jmon@nmsu.edu}

\author[Luis N\'u\~nez-Betancourt]{Luis N\'u\~nez-Betancourt$^{+}$}
\address{Luis N\'u\~nez-Betancourt \\ Centro de Investigaci\'on en Matem\'aticas\\ Guanajuato, Gto. 36023, M\'exico}
\thanks{$^{+}$ The second author was partially supported by NSF Grant 1502282 and CONACyT Grant 284598.}
\email{luisnub@cimat.mx}

\subjclass[2010]{Primary 	 ; Secondary  .}

  \begin{abstract}
We study the symbolic powers of square-free monomial ideals via symbolic Rees algebras and methods in prime characteristic. In particular, we prove that the symbolic Rees algebra and the symbolic associated graded algebra are split with respect to a morphism which resembles the Frobenius map and that exists in all characteristics. Using these methods, we recover a result by  Hoa and Trung which states that the normalized  $a$-invariants and the Castelnuovo-Mumford regularity of the symbolic powers converge. In addition, we  give a sufficient condition for the equality of the ordinary and symbolic powers of this family of ideals, and  relate it to Conforti-Cornu\'ejols conjecture. Finally, we interpret this condition in the context of linear optimization.
  \end{abstract}

\keywords{monomial ideals, ordinary powers, symbolic powers, Castelnuovo-Mumford regularity}
\subjclass[2010]{05C65, 90C27, 13F20, 13F55, 13A35}

\maketitle
\section{Introduction}

Symbolic powers of ideals
have been studied intensely over the last two decades (see   \cite{SurveySymbPowers} for a recent survey). Particular attention has been given to square-free monomial ideals, as in this setting methods from combinatorics, convex geometry, and linear optimization can be utilized to investigate properties of symbolic powers. For instance, the Cohen-Macaulay property of all symbolic powers of a square-free monomial ideal is characterized in terms  of the combinatoric structure of its underlying simplicial complex \cite{TT,TT2,Varbaro}. In addition, the symbolic Rees algebra of  monomial ideals is Noetherian \cite{LyubeznikAriRank, herzoga2007symbolic}. We note that this phenomenon  does not hold for an arbitrary ideal in a polynomial ring \cite{roberts1985prime}.

In this article, we propose a technique to deal with questions about symbolic powers of  square-free monomial ideals.
Specifically, we study  the symbolic Rees algebras of an square-free monomial ideal via methods in prime characteristic.
This combination, to the best of our knowledge, has not been previously used in combinatorial commutative algebra.
In order for our results to hold over fields of arbitrary  characteristic, we consider the map that raises every monomial to a power and resembles the  Frobenius map.
Considering this map, we obtain  that the symbolic Rees algebra and the symbolic associated graded algebra are split in this general context (see Theorem \ref{ThmGradedSpl}). In particular, these symbolic algebras are $F$-pure in prime characteristic (see Corollary \ref{Fpure}).

Motivated by the behavior of the  Castelnuovo-Mumford regularity for powers of ideals \cite{CHT,KV}, Herzog, Hoa,  and Trung \cite{HHT} asked whether  the limit  $$\lim\limits_{n\to\infty}\frac{\reg(R/I^{(n)})}{n}$$ exists for every homogeneous ideal $I$ in a polynomial ring $R$. It is known that the function $\reg(R/I^{(n)})$ is bounded by a linear function on $n$ if $I$ is a monomial ideal.  This follows because the regularity of a monomial ideal is bounded by the degree of the least common multiple of the generators  \cite{BHMultRes,HTMathZ}.

Hoa and Trung showed that the  limit above exists for square-free monomial ideals.  In fact, they showed a stronger version for the $a$-invariants \cite[Theorems 4.7 and 4.9]{LT10}. As a first consequence of our methods, we recover this result, providing an alternative proof. 

\begin{theoremx}[{see Theorem \ref{limAinv} and Corollary \ref{limReg}}]\label{MainLim}
Let $I$ be a square-free monomial ideal.
Then
$$
\lim\limits_{n\to\infty}\frac{a_i(R/I^{(n)})}{n}
$$
exists for every $0\ls i\ls \dim R/I$. In particular, 
$$
\lim\limits_{n\to\infty}\frac{\reg(R/I^{(n)})}{n}
$$
exists.
\end{theoremx}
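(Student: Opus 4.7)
The strategy is to leverage the splitting of Theorem~\ref{ThmGradedSpl}, combined with Noetherianity of $\mathcal{R}^s(I)$ and the classical Cutkosky-Herzog-Trung theorem for ordinary powers. The ring endomorphism $\phi_e$ on $\mathcal{R}^s(I)$ sending each variable $x_i$ to $x_i^e$ is well-defined because $(I^{(n)})^e \subseteq I^{(en)}$ for a square-free monomial ideal $I$, and is split by Theorem~\ref{ThmGradedSpl}. Passing to each graded strand, the splitting yields a pure $R$-module inclusion $R/I^{(n)} \hookrightarrow R/I^{(en)}$ in which degrees on the target are rescaled by a factor of $e$ to match. Applying $H^i_{\mathfrak m}(-)$ and tracking the top non-vanishing degree of local cohomology produces the key inequality
$$e\cdot a_i\!\left(R/I^{(n)}\right) \;\leq\; a_i\!\left(R/I^{(en)}\right),$$
so the normalized sequence $a_i(R/I^{(n)})/n$ is non-decreasing under divisibility in $n$.

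Next, Noetherianity of $\mathcal{R}^s(I)$ (recalled in the introduction) lets us choose $d \geq 1$ with $I^{(nd)} = (I^{(d)})^n$ for all $n$, i.e., the Veronese subalgebra $\mathcal{R}^s(I)^{(d)}$ is standard graded. Applying the Cutkosky-Herzog-Trung theorem, together with its extension to every $a$-invariant by Trung-Wang, to the ordinary powers of $J := I^{(d)}$ yields the existence of $\lim_n a_i(R/J^n)/n$, equivalently the convergence of the subsequence $a_i(R/I^{(nd)})/(nd)$ to some value $L_i$. The monotonicity established above forces $L_i = \sup_n a_i(R/I^{(n)})/n$, which is finite in view of the known linear bound $\reg(R/I^{(n)}) \leq Cn$ for monomial ideals.

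The remaining task, which I expect to be the main obstacle, is to upgrade convergence along the subsequence of multiples of $d$ to convergence along all positive integers. Monotonicity immediately gives $\limsup_n a_i(R/I^{(n)})/n \leq L_i$; for the matching $\liminf$, my intended approach is to iterate the splitting---working simultaneously with $\phi_e$ for several values of $e$ and exploiting the short exact sequences
$$0 \to I^{(n)}/I^{(n+1)} \to R/I^{(n+1)} \to R/I^{(n)} \to 0$$
together with the splitting of $\gr^s(I)$---to control the oscillation of $a_i(R/I^{(n)})$ as $n$ varies in a bounded window. Combined with the density of multiples of $d$ and the convergence already established, this should force $\liminf \geq L_i$.

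The corollary for Castelnuovo-Mumford regularity follows at once from the identity $\reg(R/M) = \max_i \{a_i(M) + i\}$ for a finitely generated graded module $M$, since the additive shifts by $i$ are swallowed upon dividing by $n$ and passing to the limit, giving $\lim_n \reg(R/I^{(n)})/n = \max_i L_i$.
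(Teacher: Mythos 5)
Your first step---deriving $e\,a_i(R/I^{(n)})\le a_i(R/I^{(en)})$ from the splitting---is correct and is exactly the mechanism the paper uses. But you have only extracted the \emph{exact-multiple} version of the inequality, and the gap you yourself flag at the end is real and is not closed by your sketch. Monotonicity under divisibility plus convergence along the multiples of a single $d$ does not force the full limit to exist: a sequence with $\alpha_n=\alpha$ for $n$ even and $\alpha_n=\alpha-1$ for $n$ odd satisfies $\alpha_n\le\alpha_{en}$ for all $e,n$ and converges along every arithmetic subsequence $nd$ with $d$ even, yet has no limit. Your proposed repair---using the short exact sequences $0\to I^{(n)}/I^{(n+1)}\to R/I^{(n+1)}\to R/I^{(n)}\to 0$ and the splitting of $\gr^s_I(R)$ to ``control the oscillation''---does not obviously work: the local cohomology long exact sequence only bounds $a_i(R/I^{(n+1)})$ by the maximum of $a_i(R/I^{(n)})$ and $a_i(I^{(n)}/I^{(n+1)})$, and you have no control on the latter that would confine the oscillation to $o(n)$.

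The missing idea is the stronger form of the splitting statement: the paper's Lemma \ref{LemmaStable} shows $\Phi^R_m\bigl((I^{(nm+j)})^{1/m}\bigr)=I^{(n+1)}$ for \emph{every} $1\le j\le m$, not only $j=m$. This yields $a_i(R/I^{(n)})\ge m\,a_i(R/I^{(\lceil n/m\rceil)})$ for all $n,m$ (Theorem \ref{ThmDivision}(2)), i.e.\ a comparison between $\alpha_n$ and $\alpha_t$ whenever $t=\lceil n/m\rceil$ for some $m$ --- a far denser set of comparisons than divisibility. With this, the paper's Theorem \ref{limAinv} concludes directly: pick $t$ with $\alpha_t$ within $\epsilon$ of $\alpha=\sup_n\alpha_n$ (finite by the linear regularity bound, as you note); for $n\ge t^2$ write $n=tq+r$ and take $m=q+1$, so that $\lceil n/m\rceil=t$ and $\alpha_n\ge\frac{tq+t}{tq+r}\alpha_t\ge\alpha_t>\alpha-\epsilon$. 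This makes your entire detour through Noetherianity of $\R^s(I)$, the Veronese subalgebra, and Cutkosky--Herzog--Trung unnecessary. Your treatment of the regularity corollary is fine and matches Corollary \ref{limReg}.
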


The function $\reg(R/I^{(n)})$ is in fact a linear quasi-polynomial  if $I$ is a monomial ideal \cite{HHT}. In addition, as a consequence of our previous result, we recover  properties of this quasi-polynomial in Corollary \ref{CorQP}. 
Along the way towards proving Theorem \ref{MainLim}, we showed that $$a_i(R/I^{(n)})\gs m a_i(R/I^{(\lceil\frac{n}{m}\rceil)})$$ 
for every  $n,\, m\in \ZZ_{>0}$.
In addition, we showed that  
$$\depth(R/I^{(n)})\ls \depth(R/I^{(\lceil\frac{n}{m}\rceil)})$$ for every  $n,\, m\in \ZZ_{>0}$ (see Theorem \ref{ThmDivision}). Similar results were previously obtained for the Stanley depth of symbolic powers \cite{StanleyDepth}.

Conforti and  Cornu\'ejols \cite{CC} made a conjecture in the context of linear optimization. This conjecture was translated as a characterization of the set of square-free monomials ideals whose symbolic and ordinary powers are equal \cite {GRV,GVV}.
The following definition is needed to state this conjecture.

\begin{definitionx}  
\rm  A square-free monomial ideal $I$ of height $c$ is \it K\"onig \rm if there exists a
regular sequence of monomials in $I$ of length $c$.  The ideal $I$ is said to be  \it packed \rm if every ideal
obtained from $I$ by setting any number of variables equal to $0$ or $1$ is K\"onig.
\end{definitionx}

The Conforti-Cornu\'ejols conjecture 
can be stated as follows.

\begin{conjecturex}[\cite{CC}]\label{ConjPackingProb}
A square-free monomial ideal $I$ is packed if and only if $I^n=I^{(n)}$ for every $n\in\ZZ_{>0}.$
\end{conjecturex}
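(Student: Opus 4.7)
The plan is to attack the two implications separately, since they differ greatly in depth.

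\emph{Easier direction: equality implies packed.} I would first check that the property ``$I^n = I^{(n)}$ for all $n \in \ZZ_{>0}$'' is preserved under the two minor operations of setting a variable to $0$ or to $1$: both operations commute with taking ordinary powers of monomial ideals, and they commute with intersections of monomial primes (hence with symbolic powers), so every minor inherits the property. Second, I would show that any square-free monomial ideal $J$ satisfying $J^n = J^{(n)}$ for all $n$ is K\"onig. This property is equivalent to $J$ being normally torsion-free, i.e.\ $\Ass(R/J^n) = \Min(R/J)$ for all $n$, from which one extracts a regular sequence of monomial generators of length $\height(J)$ by a standard unmixedness argument. Combining the two: if $I$ has equal symbolic and ordinary powers in every degree, then every minor is K\"onig, so $I$ is packed.

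\emph{Harder direction: packed implies equality.} This is the deep content of the conjecture and remains open in general. My plan would be an induction on the number of variables using the splitting of the symbolic Rees algebra $\R^s(I) = \bigoplus_{n \gs 0} I^{(n)} t^n$ provided by Theorem \ref{ThmGradedSpl}. Since the packing property passes to minors, inductively each proper minor of $I$ satisfies equality of ordinary and symbolic powers. The graded splitting of $\R^s(I)$ with respect to the Frobenius-like power map gives strong control on its associated primes in each graded component; one would try to combine this with the inductive hypothesis on the complements of the minimal primes of $I$ to force $\Ass(R/I^n) \subseteq \Min(R/I)$ for all $n$, which is exactly equivalent to $I^n = I^{(n)}$ for all $n$.

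The main obstacle will be the ``globalization'' step: passing from equality on every proper minor back to equality on $I$ itself. Neither the splitting of $\R^s(I)$ nor the inductive hypothesis on minors directly excludes the appearance of embedded primes of $R/I^n$ for some large $n$, which is the very failure one must rule out. Absent a new combinatorial input that uses the packing hypothesis on $I$ itself and not merely on its minors, the best one can realistically achieve with this plan is a sufficient algebraic condition on $I$ under which the induction closes; relating such a condition to the conjecture is precisely the program the paper announces for the remainder of the text.
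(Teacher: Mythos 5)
There is nothing in the paper to compare your proposal against: the statement you were asked to prove is the Conforti--Cornu\'ejols conjecture, which the paper records precisely as an open \emph{conjecture} and does not prove. The paper only notes that one implication is known (if $I^n=I^{(n)}$ for all $n$ then $I$ is packed) and then contributes something orthogonal to your plan, namely Theorem \ref{allPowMon}: a finite verification criterion showing that equality for $n\ls\lceil\mu(I)/2\rceil$ already forces equality for all $n$, which turns the conjecture into a finitely checkable condition for each individual ideal. You correctly identify the status of both directions, and your concession that the ``packed implies equality'' direction does not close is accurate --- so the honest verdict is that your proposal is not a proof, and no proof is currently known.

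Two more specific comments. For the known direction, your reduction is in the right spirit (the standard route is via the equivalence of $I^n=I^{(n)}$ for all $n$ with the max-flow-min-cut property of the associated clutter, as in the references \cite{GVV,GRV} the paper cites), but the step ``setting a variable to $0$ or $1$ commutes with symbolic powers because it commutes with intersections of monomial primes'' is not literally true as stated: after setting $x_i=1$, a minimal prime of $I$ containing $x_i$ need not map to a minimal prime of the minor, so the passage of normal torsion-freeness to minors requires a genuine argument (it is a known result, but not by the one-line commutation you invoke). For the open direction, your proposed induction on minors faces exactly the obstruction you name: neither the splitting of $\R^s(I)$ from Theorem \ref{ThmGradedSpl} nor equality on all proper minors controls the embedded primes of $R/I^n$ for the ideal itself, and no known argument bridges that gap. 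If you want to contribute something provable in the direction of the conjecture, the realistic target is the paper's: use the splitting characterization of Theorem \ref{MainSQfree} together with Lemma \ref{Lemma Obs pe u(I)} to bound how many powers one must check, rather than attempting the conjecture itself.
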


We point out that one direction of the conjecture is already known. Explicitly,   if $I^n=I^{(n)}$ for every $n\in\ZZ_{>0}$, then $I$ is packed.

A related question due to Huneke, and brought to our attention by  H\`a\footnote{ BIRS-CMO workshop on {\it Ordinary and Symbolic Powers of Ideals} Summer of 2017,  Casa Matem\'atica Oaxaca, Mexico.},  asks whether  there exists a number $N$, in terms of $I$, such that if $I^n=I^{(n)}$ for $n\ls N$, then the equality holds for every $n\gs 1$. If the answer to this question is $\height(I)$, then Conforti-Cornu\'ejols holds. 
This is expected as a similar property is known for  integral closures of ordinary powers. Specifically, it suffices to verify the equality up to   the analytic spread of $I$ minus one \cite{Singla} (see also  \cite{RRV}).  A uniform positive answer to Huneke's question can be deduced from \cite[Theorem 5.6]{herzoga2007symbolic}; if $d=\dim R$, this result guarantees  that  $N=\lfloor\frac{(d+1)^{(d+3)/2}}{2^d}\rfloor$ works for any $I$.


In our next main result, we provide a sharper $N$.

 \begin{theoremx}[{see Theorem \ref{allPowMon}}]\label{MainAllPowMon}
 Let $I$ be a square-free monomial ideal and $\mu(I)$ its minimal number of generators.
Then  $I^n=I^{(n)}$ for every $n\ls \lceil\frac{ \mu(I)}{2}\rceil$ if and only if $I^n=I^{(n)}$ for every $n\in\ZZ_{>0}$.
 \end{theoremx}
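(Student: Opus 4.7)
The ``only if'' direction is immediate, so I focus on the ``if'' direction: assuming $I^n=I^{(n)}$ for every $n\leq N:=\lceil \mu(I)/2\rceil$, the goal is to show $I^n=I^{(n)}$ for every $n\geq 1$.

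The strategy is to prove the stronger statement that the symbolic Rees algebra $\R^s(I)=\bigoplus_{n\geq 0}I^{(n)}t^n$ is generated as an $R$-algebra in $t$-degrees at most $N$. If this holds, then the hypothesis forces every minimal algebra generator $mt^n$ (with $n\leq N$) into $I^nt^n\subseteq \R(I)=R[f_1t,\dots,f_rt]$, whence $\R^s(I)=\R(I)$ and $I^n=I^{(n)}$ for every $n$.

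Thus the core of the plan is to bound the $t$-degrees of the minimal $R$-algebra generators of $\R^s(I)$. Suppose $mt^n$ is such a generator, so $m\in I^{(n)}$ and $m\notin I^{(a)}\cdot I^{(n-a)}$ for every $1\leq a\leq n-1$. To show $n\leq N$, I would invoke Theorem \ref{ThmGradedSpl}: the Frobenius-like power map $\phi_q:\R^s(I)\to\R^s(I)$ given on homogeneous elements by $\phi_q(rt^n)=r^qt^{qn}$ admits an $R$-linear splitting in every characteristic. Taking $q=2$, the lift $m^2t^{2n}\in I^{(2n)}t^{2n}$ is tied to $mt^n$ via this retraction. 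If $n>N$, then $2n>\mu(I)$, so in any expression of $m^2$ involving the minimal generators $f_1,\dots,f_r$ of $I$, some $f_i$ is forced to appear with multiplicity at least two by pigeonhole; pushing this repetition through the splitting should produce a factorization $m=m_1m_2$ with $m_jt^{a_j}\in\R^s(I)$ and $a_1+a_2=n$, contradicting minimality.

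The main obstacle is converting the $R$-linear splitting of $\phi_2$ into a genuinely multiplicative factorization of the monomial $m$. A priori the retract is not a ring map, so one cannot simply ``take square roots.'' Here the square-free monomial structure is essential: in $\R^s(I)$ every element is determined by its exponent vector, which should allow the doubled exponent vector of $m^2$, together with a repetition $f_i^{\,2}$ arising from the pigeonhole argument, to be descended into a bona fide decomposition of the exponent vector of $m$ as a sum of two exponent vectors each lying in a lower-degree symbolic power. The factor $2$ in the bound $\lceil \mu(I)/2\rceil$ precisely reflects the gain from applying $\phi_q$ with $q=2$ and then pigeonholing the $\mu(I)$ monomial generators of $I$ against the $2n$ factors of $m^2$; a bound of $\mu(I)$ would already follow from pigeonholing on $m$ itself without invoking the splitting.
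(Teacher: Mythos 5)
Your reduction and your final step are fine in outline, but the core of your argument --- the claim that $\R^s(I)$ is \emph{unconditionally} generated as an $R$-algebra in $t$-degrees at most $N=\lceil\frac{\mu(I)}{2}\rceil$ --- is both stronger than the theorem and not established by your sketch; the gap is in the pigeonhole step. You apply the pigeonhole to ``any expression of $m^2$ involving the minimal generators $f_1,\dots,f_r$ of $I$,'' which presupposes that $m^2$ is a product of $2n$ of the $f_i$ (times a monomial), i.e.\ that $m^2\in I^{2n}$. But for a minimal algebra generator $mt^n$ of $\R^s(I)$ you only know $m\in I^{(n)}$, hence $m^2\in I^{(2n)}$, and membership in a symbolic power carries no such product decomposition --- when $I^{(n)}\neq I^n$ this is exactly what fails, so the step is circular. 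Concretely, for the $5$-cycle of Example \ref{ExSharp} the generator $m=x_1\cdots x_5\in I^{(3)}$ has $\deg m^2=10<12$, so $m^2\notin I^{6}$ and there is no expression of $m^2$ with six factors from $I$ to pigeonhole. (For monomial ideals, $m^2\in I^{2n}$ would force $m\in\overline{I^n}$, which already fails for degree reasons for typical symbolic generators.) The second obstacle, which you flag yourself, is equally serious: $\Phi_2^R$ is only $R$-linear, and no mechanism is given for descending a repeated factor $f_i^{\,2}$ of $m^2$ to a factorization $m=m_1m_2$ with both factors in lower symbolic powers. Since the whole weight of the proof rests on these two steps, the argument as proposed does not go through.

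The paper's proof uses the same two ingredients --- the degree-$2$ splitting and a pigeonhole on the $\mu(I)$ generators --- but arranges them so that the pigeonhole only ever touches \emph{ordinary} powers, where monomials genuinely are products of generators. One verifies the containment $\Phi_2^R\big((I^{2n+1})^{1/2}\big)\subseteq I^{n+1}$ for all $n$ by induction: for $n<N$ it follows from the hypothesis together with Lemma \ref{LemmaStable} (since there $I^{n+1}=I^{(n+1)}$), and for $n\geq N$ one has $2n+1\geq\mu(I)+1$, so Lemma \ref{Lemma Obs pe u(I)} gives $(I^{2n+1})^{1/2}=(I^{2n-1})^{1/2}\,I$ and $R$-linearity of $\Phi_2^R$ peels off the factor of $I$. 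The bridge back to symbolic powers is Theorem \ref{MainSQfree}: the containment makes $\cJ(I)$ radical, hence $\gr_I(R)$ reduced, hence $I^n=I^{(n)}$ for all $n$. Note in particular that the hypothesis enters as the base case of an induction on ordinary-power containments, not as a final filter applied to a pre-existing set of algebra generators; to salvage your approach you would need either to prove the unconditional generation bound for $\R^s(I)$ by entirely different means, or to restructure the argument along these lines.
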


 In Example \ref{ExSharp} we show that the bound given in Theorem \ref{MainAllPowMon} is sharp. The previous result gives a finite algorithm to verify Conjecture \ref{ConjPackingProb} for a specific square-free monomial ideal.
We refer to the work of Gitler, Valencia, and Villarreal \cite[Remark 3.5]{GVV} for a different algorithm to verify Conjecture  \ref{ConjPackingProb}. The referee pointed out that another finite algorithm can be obtained from an equivalent conjecture to  Conjecture \ref{ConjPackingProb} \cite[Conjecture 5.7]{HaTrung}. Indeed, this  conjecture  states that if  a clutter satisfies  the {\it packing property} then it satisfies the {\it integer round-down property}.  On the other hand, the integer round-down property can be checked in a finite number of  steps \cite[Corollary 2.3]{BT82}.

In Section \ref{SecOptimization}, we recall the ideas from linear optimization that originally gave rise to Conjecture \ref{ConjPackingProb}. In particular, we translate 
Theorem \ref{MainAllPowMon} to this context in Theorem \ref{MFMC}, showing that the Max-Flow-Min-Cut property of clutters can be verified with a finite process (see \cite[Corollary 2.3]{BT82} for a related result).

\section{Notation}\label{prelim}
In this section we set up the notation  used throughout the entire manuscript. We assume $R=k[x_1,\ldots, x_d]$ is a standard graded polynomial ring over the field $k$ and $\fm=(x_1,\ldots,x_d)$. The  ideal $I\subseteq R$ is assumed to be a monomial ideal.

For a fixed $m\in\ZZ_{>0}$ we set  $R^{1/m}=k[x^{1/m}_1,\ldots, x^{1/m}_d]$. We denote by $I^{1/m}$ the ideal of $R^{1/m}$ generated by $\{f^{1/m}\mid f\in  I \hbox{ monomial} \} $.

We note that $R$ and $R^{1/m}$ are isomorphic as rings. 
Then the category of $R$-modules is naturally equivalent to the category of $R^{1/m}$-modules.
In addition, $I$ corresponds to $I^{1/m}$ under this isomorphism.  For an $R$-module, $M$, we denote by  $M^{1/m}$ the corresponding $R^{1/m}$-module. Given that $R\subseteq R^{1/m}$, $M^{1/m}$ obtains a structure of $R$-module via restriction of scalars.  If $A=k[x_1,\ldots,x_d,t]$, where $t$ is an extra variable, we also use the  notation  $A^{1/m}=k[x^{1/m}_1,\ldots, x^{1/m}_d,t^{1/m}]$.
We often refer to the containment $R\subseteq R^{1/m}$ and  $A\subseteq A^{1/m}$.

For a vector $\fa=(a_1,\ldots, a_d)\in \QQ^d_{\gs 0}$ we denote by $\fx^\fa$ the monomial $x_1^{a_1}\cdots x_d^{a_d}$.

\begin{definition}
Given $n\in \ZZ_{>0}$, we denote by $I^{(n)}$ the {\it {$n$th}-symbolic power of $I$}:
$$I^{(n)}=\bigcap_{\fp\in \Min(R/I)}I^nR_\fp\cap R.$$ 
\end{definition}

We now consider algebras associated to ordinary and symbolic powers of ideals.

\begin{definition} \label{blowup}
We consider the  following graded algebras.
\begin{enumerate}
\item[(i)]  The {\it  Rees algebra} of $I$: $\R(I)=R[It]=\oplus_{n\gs 0}I^nt^n\subseteq A$.
\item[(ii)] The {\it associated graded algebra} of $I$: $\gr_I(R)=\oplus_{n\gs 0}I^n/I^{n+1}$.
\item[(iii)] The  {\it symbolic Rees algebra} of $I$: $\R^s(I)=\oplus_{n \gs 0}I^{(n)}t^n\subseteq A$.
\item[(iv)] The {\it  symbolic associated graded algebra} of $I$: $\gr^s_I(R)=\oplus_{n\gs 0}I^{(n)}/I^{(n+1)}$.
\end{enumerate} 
\end{definition}

\section{Castelnuovo-Mumford regularity and $a$-invaratians}

In this section we study the graded structure of  symbolic powers. In particular, we prove Theorem \ref{MainLim}. The techniques here are inspired by methods in prime characteristic used to bound $a$-invariants of  $F$-pure graded rings \cite{HRFpurity,DSNBFpurity}.
 In particular,  the results proved in this section are motivated by the fact that the symbolic Rees and associated graded algebra are $F$-pure for every prime characteristic (see Corollary \ref{Fpure}). 

We now recall the definition of $a$-invariants and  Castelnuovo-Mumford regularity in terms of local cohomology. We refer to Broadmann and Sharp's book \cite{BroSharp} on local cohomology for more details about this subject. 
If $M$ is a graded $R$-module, we denote by $$a_i(M)=\max\{j\mid \HH{i}{\fm}{M}_j\neq 0\}$$ the {\it $i$-th $a$-invariant} of $M$ for $0\ls i\ls \dim M$. The {\it Castelnuovo-Mumford regularity} of $M$ is defined as $$\reg(M)=\max\{a_i(M)+i\}.$$


We now define a splitting from $R^{1/m}$ to $R$
 which is inspired by the trace map in prime characteristic. In fact, these maps are the same if $k=\FF_p$ and $m=p$.

\begin{definition}\label{defSplitting}
For $m\in \ZZ_{>0}$, we define  the $R$-homomorphism  $\Phi^R_m:R^{1/m}\to R$ induced by 
$$
\Phi^R_m(\fx^{\fa/m})=
\begin{cases} 
     \fx^{\fa/m} & \fa \equiv \mathbf{0}\, (\bmod\, m);\\
      0 & \hbox{otherwise,}
\end{cases}.
$$
\end{definition}
\noindent where $\fa\in  \ZZ_{\gs 0}^d$ and $\mathbf{0}=(0,\ldots, 0)\in \ZZ_{\gs 0}^d$. 

We note that $\Phi^R_m$ restricted to $R$ is the identity. Then  $R$ is isomorphic to a direct summand of $R^{1/m}$. We also consider the analogous map $\Phi^A_m:A^{1/m}\to A$.
We now show that our splitting is compatible with symbolic powers; for this  we need the following remark.

\begin{remark}\label{inclusionSymbol}
Let $I\subseteq R$ be a square-free monomial and  $Q_1,\ldots, Q_s$ its minimal primes.
We have  
\begin{align*}
I^{(n)}=Q^n_1\cap \ldots \cap Q^n_s &\subseteq 
(Q^{nm}_1)^{1/m}\cap \ldots \cap (Q^{nm}_s)^{1/m}
\\ &=(Q^{nm}_1\cap \ldots \cap Q^{nm}_s)^{1/m}=(I^{(nm)})^{1/m}.
\end{align*}

\end{remark}

 \begin{lemma}\label{LemmaStable}
Let $I$ be a square-free  monomial ideal.
 Then $$\Phi_m^R\big(( I^{(n m+j)})^{1/m}\big)= I^{(n+1)}$$ for every $n\in \ZZ_{\gs 0}$, $m\in \ZZ_{>0}$, and $1\ls j\ls m$.
 \end{lemma}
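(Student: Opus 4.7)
The plan is to reduce the claim to a combinatorial computation on the minimal primes of $I$. The first step would be to record that for any monomial ideal $J$ of $R^{1/m}$ one has $\Phi_m^R(J) = J \cap R$: the map fixes the monomials in $R$ and annihilates the rest, and $J \cap R$ is the monomial ideal of $R$ generated by the integer-exponent monomials of $J$. In particular $\Phi_m^R$ commutes with finite intersections of monomial ideals, since intersecting with $R$ does.

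Next, write $I = Q_1 \cap \cdots \cap Q_s$ for the monomial minimal primes of $I$. The ring isomorphism $R \cong R^{1/m}$, $x_i \mapsto x_i^{1/m}$, is compatible with intersections and powers, so
\[
(I^{(nm+j)})^{1/m} \;=\; \bigcap_{i=1}^s (Q_i^{1/m})^{nm+j},
\]
and combining with the previous step,
\[
\Phi_m^R\bigl((I^{(nm+j)})^{1/m}\bigr) \;=\; \bigcap_{i=1}^s \bigl((Q_i^{1/m})^{nm+j} \cap R\bigr).
\]

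It then remains to compute $(Q^{1/m})^N \cap R$ for each minimal prime $Q$ and $N = nm+j$. The key observation is that $Q^{1/m}$ is generated by variables of $R^{1/m}$, so a monomial $\fx^{\fa/m}$ with $\fa \in \ZZ^d_{\geq 0}$ lies in $(Q^{1/m})^N$ precisely when $\sum_{x_j \in Q} a_j \geq N$. Requiring such a monomial to lie in $R$ forces $\fa = m\fb$ for some $\fb \in \ZZ^d_{\geq 0}$, and the inequality then reads $\sum_{x_j \in Q} b_j \geq \lceil N/m \rceil$. Hence $(Q^{1/m})^N \cap R = Q^{\lceil N/m \rceil}$, and since $\lceil (nm+j)/m \rceil = n+1$ for every $1 \leq j \leq m$, intersecting over $i$ produces exactly $I^{(n+1)}$.

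I do not anticipate a substantive obstacle: the one point meriting attention is the ceiling identity $\lceil (nm+j)/m \rceil = n+1$, which is what forces the output to be independent of $j$ in the range $1 \leq j \leq m$, together with the fact that $(Q^{1/m})^N$ admits the clean linear-inequality description used above, which holds precisely because $Q^{1/m}$ is generated by variables of $R^{1/m}$.
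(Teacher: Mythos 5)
Your proof is correct and follows essentially the same route as the paper's: decompose $I^{(nm+j)}$ into powers of the minimal primes, use the linear-inequality description of $(Q^{1/m})^{N}$ for a prime $Q$ generated by variables, and conclude via the ceiling identity $\lceil (nm+j)/m\rceil = n+1$. Your observation that $\Phi_m^R(J)=J\cap R$ for monomial ideals $J\subseteq R^{1/m}$ is a nice streamlining that yields the equality in one pass, where the paper proves the two containments separately (using only $\Phi_m^R(\bigcap J_i)\subseteq\bigcap\Phi_m^R(J_i)$ for the harder direction), but the combinatorial core is identical.
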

 \begin{proof}
From Remark \ref{inclusionSymbol} it follows  that $ I^{(n+1)}\subseteq ( I^{((n+1) m)})^{1/m} \subseteq  (I^{(n m+j)})^{1/m}$. Since $\Phi_m^R$ is $R$-linear, we conclude that $I^{(n+1)}\subseteq \Phi_m^R\big(( I^{(n m+j)})^{1/m}\big)$.

We now focus on the other containment. First  we prove our claim when $I$ is a prime monomial ideal   $Q=(x_{i_1},\ldots,x_{i_\ell})$ for some $1\ls i_1<\cdots <i_\ell\ls d$. 
 In this case, we have that 
 $( Q^{(n m+j)})^{1/m}$ is generated as a $k$-vector space by 
 $$\{ (\fx^{\fa})^{1/m}\mid  a_{i_1}+\ldots+a_{i_\ell}\gs nm+j \}
 $$
Let $(\fx^{\fa})^{1/m} \in ( Q^{(n m+j)})^{1/m}$ such that $\Phi_m^R( (\fx^{\fa})^{1/m})\neq 0$, then
$\fa/m\in \ZZ_{\gs 0}^d$ and  $a_{i_1}+\ldots+a_{i_\ell}\gs nm+j$.
Set $\fb=(b_{i_1},\ldots, b_{i_\ell})=\fa/m$, since
$ b_{i_1}+\ldots+b_{i_\ell}\gs n+\frac{j}{m}$, we have $ b_{i_1}+\ldots+b_{i_\ell}\gs n+1$ and hence $\Phi_m^R((\fx^{\fa})^{1/m} )\in Q^{(n+1)}$ as desired. Now, let $I$  be an arbitrary square-free monomial ideal and  let $Q_1,\ldots,Q_s$ be its minimal primes. Then, 
$
( I^{(n m+j)})^{1/m}
=( Q_1^{(n m+j)})^{1/m} \cap \cdots\cap ( Q_s^{(n m+j)})^{1/m}.
$
Therefore,
\begin{align*}
\Phi_m^R\big( (I^{(n m+j)})^{1/m}\big) &\subseteq \Phi_m^R\big( ( Q_1^{(n m+j)}\big)^{1/m} \big)\cap \cdots\cap \Phi_m^R\big(( Q_s^{(n m+j)})^{1/m} \big)\\
&\subseteq Q^{(n+1)}_1\cap\cdots\cap Q^{(n+1)}_s=I^{(n+1)},
\end{align*}
hence the result follows.
 \end{proof}

As a consequence of the previous lemma we obtain the following relations on depths and $a$-invariants of symbolic powers; 
 these relations are key ingredients in the proof of Theorem \ref{MainLim}.
 
 \begin{thm}\label{ThmDivision}
 Let $I$ be a square-free  monomial ideal and $n,\, m\in \ZZ_{>0}$. Then
 \begin{enumerate}
 \item $\depth(R/I^{(n)})\ls \depth(R/I^{(\lceil\frac{n}{m}\rceil)})$.
 \item $a_i(R/I^{(n)})\gs m a_i(R/I^{(\lceil\frac{n}{m}\rceil)})$ for every  $0\ls i\ls \dim R/I$. 
 \end{enumerate}
 \end{thm}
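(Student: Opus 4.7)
The plan is to use the splitting map $\Phi_m^R$ from Definition \ref{defSplitting} together with Lemma \ref{LemmaStable} to realize $R/I^{(\lceil n/m\rceil)}$ as an $R$-module direct summand of $R^{1/m}/(I^{(n)})^{1/m}$, and then to deduce both inequalities from standard properties of graded direct summands.

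To construct the direct summand, set $q=\lceil n/m\rceil$ and write $n=(q-1)m+j$ with $1\ls j\ls m$. Lemma \ref{LemmaStable} then yields $\Phi_m^R\bigl((I^{(n)})^{1/m}\bigr)=I^{(q)}$, while Remark \ref{inclusionSymbol} combined with the inclusion $I^{(qm)}\subseteq I^{(n)}$ gives $I^{(q)}\subseteq (I^{(n)})^{1/m}$. Hence the inclusion $R\hookrightarrow R^{1/m}$ and the splitting $\Phi_m^R$ descend to $R$-linear maps
$$R/I^{(q)}\longrightarrow R^{1/m}/(I^{(n)})^{1/m}\longrightarrow R/I^{(q)}$$
whose composition is the identity, so $R/I^{(q)}$ is an $R$-module direct summand of $R^{1/m}/(I^{(n)})^{1/m}$.

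For part (1), applying $\HH{i}{\fm}{-}$ shows that $\HH{i}{\fm}{R/I^{(q)}}$ is a direct summand of $\HH{i}{\fm}{R^{1/m}/(I^{(n)})^{1/m}}$. Since $\fm R^{1/m}$ and $\fm^{1/m}$ share the same radical and the ring isomorphism $R^{1/m}\cong R$ sending $x_i^{1/m}\mapsto x_i$ identifies $R^{1/m}/(I^{(n)})^{1/m}$ with $R/I^{(n)}$, the local cohomology of the middle module coincides with that of $R/I^{(n)}$. Vanishing in small cohomological degrees then forces $\depth(R/I^{(n)})\ls \depth(R/I^{(q)})$. For part (2), equip $R^{1/m}$ with the $\tfrac{1}{m}\ZZ$-grading $\deg(x_i^{1/m})=1/m$; both $R\hookrightarrow R^{1/m}$ and $\Phi_m^R$ are then degree-preserving, so the splitting above is a graded direct summand of $\tfrac{1}{m}\ZZ$-graded $R$-modules. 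The ring isomorphism $R^{1/m}\cong R$ multiplies all degrees by $m$, so in the $\tfrac{1}{m}\ZZ$-grading the top nonzero graded piece of $\HH{i}{\fm}{R^{1/m}/(I^{(n)})^{1/m}}$ sits in degree $a_i(R/I^{(n)})/m$. The graded direct summand then gives $a_i(R/I^{(q)})\ls a_i(R/I^{(n)})/m$, which rearranges to (2).

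The main technical point will be the bookkeeping around the $\tfrac{1}{m}\ZZ$-grading on $R^{1/m}$: one must verify that $\Phi_m^R$ is degree-preserving in this refined grading and that, under the ring isomorphism $R^{1/m}\cong R$, the rescaling of graded pieces by a factor of $m$ is precisely what produces the coefficient $m$ appearing in part (2).
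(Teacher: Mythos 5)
Your proposal is correct and follows essentially the same route as the paper: both realize $R/I^{(\lceil n/m\rceil)}$ as a direct summand of $R^{1/m}/(I^{(n)})^{1/m}$ via Lemma \ref{LemmaStable} and the splitting $\Phi_m^R$, then read off the depth inequality from vanishing of local cohomology and the $a$-invariant inequality from the $\tfrac{1}{m}\ZZ$-grading rescaling (which the paper records as Equation \eqref{eqds} and the identity $a_i(R^{1/m}/(I^{(n)})^{1/m})=\tfrac{1}{m}a_i(R/I^{(n)})$). Your write-up is simply a slightly more explicit version, with the reparametrization $n=(q-1)m+j$ in place of the paper's $nm+j$.
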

\begin{proof}
By Lemma \ref{LemmaStable} the natural map $\iota :R/I^{(n+1)}\to R^{1/m}/(I^{(nm+j)})^{1/m}$ splits for every $n,\, m\in \ZZ_{>0} $ and $1\ls j\ls m$ with the splitting $\Phi_m^R$. Therefore,  the module $\HH{i}{\fm}{R/I^{(n+1)}}$ is a direct summand of $\HH{i}{\fm}{R^{1/m}/(I^{(nm+j)})^{1/m}}$ for every $1\ls i\ls\dim R/I$. 
We note that  
\begin{equation}\label{eqds}
\left( \HH{i}{\fm}{ R/I^{(nm+j)}\right)}^{1/m}=
\HH{i}{\fm^{1/m}}{R^{1/m}/(I^{(nm+j)})^{1/m}}
=\HH{i}{\fm}{R^{1/m}/(I^{(nm+j)})^{1/m}},
\end{equation}
because the ring isomorphism $R\cong R^{1/m}$ gives an equivalence of categories and $\fm R^{1/m}$ is $\fm^{1/m}$. 
We conclude that
$\HH{i}{\fm}{R/(I^{(nm+j)})}=0$
implies  $\HH{i}{\fm}{R^{1/m}/(I^{(nm+j)})^{1/m}}=0$, and then $\HH{i}{\fm}{R/I^{(n+1)}}=0$.
Therefore, $$\depth(R/I^{(n+1)})\gs \depth(  R/(I^{(nm+j)})),$$ which proves the first part.

From the splitting $\iota$ and Equation \eqref{eqds}, we have 
$$
a_i(R/I^{(n+1)})\ls a_i \left( R^{1/m}/(I^{(nm+j)})^{1/m}\right)=\frac{1}{m} a_i \left(  R/I^{(nm+j)}\right),
$$
and the second part follows.
\end{proof}

 As a consequence of Theorem \ref{ThmDivision} we recover the following limits for $a$-invariants of symbolic powers. 

\begin{thm}[{\cite[Theorem 4.7]{LT10}}]\label{limAinv}
Let $I$ be a square-free monomial ideal.
Then
$$
\lim\limits_{n\to\infty}\frac{a_i(R/I^{(n)})}{n}
$$
exists for every $0\ls i\ls \dim R/I$.
\end{thm}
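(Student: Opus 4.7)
The plan is to extract the limit from the key inequality of Theorem \ref{ThmDivision}, namely $a_i(R/I^{(n)}) \gs m\, a_i(R/I^{(\lceil n/m \rceil)})$ for all $n, m \in \ZZ_{>0}$, by a Fekete-style argument. Write $b_n = a_i(R/I^{(n)})$ and $c_n = b_n/n$; the goal is to show that $\lim_{n\to\infty} c_n$ exists. A preliminary step is to check that $(c_n)$ is bounded above: because the symbolic Rees algebra $\R^s(I)$ is Noetherian \cite{LyubeznikAriRank, herzoga2007symbolic}, the minimal generators of $I^{(n)}$ have degrees $O(n)$, so the regularity of the monomial ideal $I^{(n)}$ is $O(n)$ (via the LCM bound of \cite{BHMultRes,HTMathZ}), and hence $b_n \ls \reg(R/I^{(n)}) = O(n)$.

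For the main step, fix $k \in \ZZ_{>0}$ and, for each $n > k(k-1)$, set $m = \lceil n/k \rceil$. A short arithmetic check shows that $\lceil n/m \rceil = k$ for such $n$: from $m \gs n/k$ we get $\lceil n/m \rceil \ls k$, while $m \ls n/k + 1$ gives $n/m \gs nk/(n+k) > k-1$ whenever $n > k(k-1)$, forcing $\lceil n/m \rceil \gs k$. Plugging this choice of $m$ into Theorem \ref{ThmDivision} yields $b_n \gs \lceil n/k \rceil\, b_k$, and dividing by $n$ gives
\[
c_n \gs \frac{\lceil n/k\rceil}{n}\, b_k.
\]
Since $\lceil n/k \rceil/n \to 1/k$ as $n \to \infty$, the right-hand side converges to $b_k/k = c_k$ (regardless of the sign of $b_k$), so $\liminf_{n\to\infty} c_n \gs c_k$.

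Because $k$ was arbitrary, one obtains $\liminf_{n\to\infty} c_n \gs \sup_k c_k \gs \limsup_{n\to\infty} c_n$, forcing equality and the existence of $\lim_{n\to\infty} c_n = \sup_k c_k$; the upper bound above guarantees this limit is finite. The real content of the argument is packed into Theorem \ref{ThmDivision}, which is obtained by passing the splitting of Lemma \ref{LemmaStable} through local cohomology; given that inequality, what remains is only the arithmetic choice of $m$ as a function of $n$ and the target $k$, together with the Noetherian upper bound needed to rule out the limit being $+\infty$.
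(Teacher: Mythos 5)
Your proof is correct and takes essentially the same route as the paper's: the same superadditivity-type inequality from Theorem \ref{ThmDivision}(2), the same upper bound via \cite[Corollary 3.3]{herzoga2007symbolic}, and the same Fekete-style choice of $m$ so that $\lceil n/m\rceil$ equals a prescribed target (the paper writes $n=tq+r$ and takes $m=q+1$ for $n\gs t^2$, which is your $m=\lceil n/k\rceil$ for $n>k(k-1)$ in slightly different clothing). If anything, passing to $\liminf_n c_n\gs \lim_n \frac{\lceil n/k\rceil}{n}b_k=c_k$ is a bit more robust than the paper's pointwise strict inequality $\alpha_n>\alpha_t$, since it sidesteps any sign considerations on $b_k$.
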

\begin{proof}
Fix $i$.  The sequence $\{\frac{\reg(R/I^{(n)})}{n}\}_{n\gs 0}$ has an upper bound \cite[Corollary 3.3]{herzoga2007symbolic}, then so does $\{\frac{a_i(R/I^{(n)})}{n}\}_{n\gs 0}$. Set $\alpha_n = \frac{a_i (R/I^{(n)})}{n}$ for every $n\gs 0$ and $\alpha=\sup\{ \alpha_n\}$.

If $\alpha=-\infty$, we have that $\alpha_n=-\infty$ for every $n$ and the claim follows.  We now assume that $\alpha\neq -\infty$ and show that 
$\lim\limits_{n\to\infty}\alpha_n=\alpha.$ 
We note that Theorem \ref{ThmDivision}(2) implies 
\begin{equation}\label{useIneq}
\alpha_n\gs \frac{m}{n}\left\lceil\frac{n}{m}\right\rceil\alpha_{\lceil\frac{n}{m}\rceil}\qquad \text{ for every }n,m\in \ZZ_{>0}.
\end{equation}

Fix $\epsilon\in \RR_{>0}$ and  let $t\in \ZZ_{\gs 0}$ be such that $\alpha-\alpha_t<\epsilon.$ It suffices to show $\alpha_n> \alpha_t$  for every $n\gs t^2$ as this implies that $\alpha-\alpha_n<\epsilon$.  Let $n = t q + r$ for some $q, r\in \ZZ_{\gs 0}$  and $r\in [0,t-1]$,
then $n=(q+1)t-(t-r)$. 
Since $n\gs t^2$, we obtain  $q\gs t$ and then, $0< t-r\ls q-r<q+1,$ and so $\lceil\frac{n}{q+1}\rceil=t.$ Applying  inequality \eqref{useIneq} with $m=q+1$ we have $\alpha_n\gs \frac{tq+t}{tq+r}\alpha _t>\alpha_t$, which finishes the proof.
\end{proof}

As a corollary we obtain that the related limit for the Castelnuovo-Mumford regularity of symbolic powers exists. 

\begin{cor}[{\cite[Theorem 4.9]{LT10}}]\label{limReg}
Let $I$ be a square-free monomial ideal.
Then
$$
\lim\limits_{n\to\infty}\frac{\reg(R/I^{(n)})}{n}=\max\left\{\lim\limits_{n\to\infty}\frac{a_i(R/I^{(n)}) }{n}\right\}.
$$
\end{cor}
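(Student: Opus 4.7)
The plan is to reduce this corollary directly to Theorem \ref{limAinv} by exploiting the finite-maximum structure of the Castelnuovo--Mumford regularity. Writing $d'=\dim R/I$, which is independent of $n$ since $I^{(n)}$ has the same minimal primes as $I$, the definition of regularity gives
$$
\frac{\reg(R/I^{(n)})}{n}
=\max_{0\ls i\ls d'}\left\{\frac{a_i(R/I^{(n)})}{n}+\frac{i}{n}\right\}.
$$

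First I would note that the index set is finite and fixed, and that $i/n\to 0$ uniformly in $0\ls i\ls d'$ as $n\to\infty$. Second, by Theorem \ref{limAinv}, each sequence $\{a_i(R/I^{(n)})/n\}_{n}$ converges (possibly to $-\infty$, when $H^i_\fm(R/I^{(n)})$ vanishes for $n\gg 0$), so each term in the displayed maximum converges individually. Third, I would invoke the elementary fact that the limit of a maximum over a finite index set of convergent sequences is the maximum of the limits, which combined with the vanishing of the $i/n$ correction yields exactly the equality in the statement.

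No step should present a serious obstacle. The only minor subtlety is the possibility that some of the limits $\lim_{n\to\infty}a_i(R/I^{(n)})/n$ equal $-\infty$: such indices simply drop out of the maximum for large $n$, and since at least one index (for instance $i=d'$) always contributes a finite $a$-invariant because $H^{d'}_\fm(R/I^{(n)})\neq 0$, both sides of the claimed identity are finite and the interchange of $\lim$ and $\max$ is justified.
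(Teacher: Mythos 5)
Your proof is correct and follows essentially the same route as the paper: both write $\reg(R/I^{(n)})$ as a finite maximum of $a_i(R/I^{(n)})+i$, observe that the $i/n$ contribution vanishes in the limit, and interchange $\lim$ and $\max$ using Theorem \ref{limAinv}. Your extra care about the indices with limit $-\infty$ and the nonvanishing of the top local cohomology is a welcome elaboration of a point the paper leaves implicit, but it does not change the argument.
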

\begin{proof}
The result follows from Theorem \ref{limAinv} and the following equalities
\begin{align*}
\lim\limits_{n\to\infty}\frac{\reg(R/I^{(n)})}{n} &= \lim\limits_{n\to\infty}\frac{ \max\{a_i(R/I^{(n)}) + i\}}{n}
\\ &=  \lim\limits_{n\to\infty}\frac{ \max\{a_i(R/I^{(n)}) \}}{n} = \max\left\{\lim\limits_{n\to\infty}\frac{a_i(R/I^{(n)}) }{n}\right\}.
\end{align*}
\end{proof}

\begin{remark}\label{RemWC}
If $I$ is a square-free monomial ideal,
then
$$
\lim\limits_{n\to\infty}\frac{\reg(I^{(n)})}{n}=\lim\limits_{n\to\infty}\frac{\reg(R/I^{(n)})+1}{n}=\lim\limits_{n\to\infty}\frac{\reg(R/I^{(n)})}{n}.$$
Let $\alpha(I^{(n)})$ denote the smallest degree of a nonzero element of $I^{(n)}$. The 
{\it Waldschmidt constant}  is defined by 
$\widehat{\alpha}(I)=\lim\limits_{n\to\infty}\frac{\alpha(I^{(n)}) }{n}$ \cite{BocciEtAl}. Then 
$$
\widehat{\alpha}(I)\ls \lim\limits_{n\to\infty}\frac{\reg(I^{(n)})}{n}=\lim\limits_{n\to\infty}\frac{\reg(R/I^{(n)})}{n}.
$$

\end{remark}

We recall  that $\R^s(I)$ is a Noetherian algebra \cite[Theorem 3.2]{herzoga2007symbolic}. Therefore, $\reg(I^{(n)})$ agrees with a linear quasi-polynomial  $\gamma(n)  n + \theta(n)$ for $n\gg 0$.   As a consequence of Corollary \ref{limReg} we obtain that the leading coefficient of this quasi-polynomial is constant for square-free monomial ideals and we obtain a bound for $\theta(n)$. 

\begin{cor}\label{CorQP}\cite[Theorem 4.9]{LT10}
Let $I$ be a square-free monomial ideal. Then $\gamma(n) $ is equal to a constant $\gamma$ and $\theta(n)\ls \dim(R/I)+1$ for every  $n\gg 0$.
\end{cor}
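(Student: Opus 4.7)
The plan is to extract both conclusions directly from results already in hand: the existence of the limit (Corollary \ref{limReg}) together with the strengthening from the proof of Theorem \ref{limAinv} that the limit is in fact a supremum. Because $\R^s(I)$ is Noetherian, it is standard that $\reg(I^{(n)})$ agrees with a linear quasi-polynomial $\gamma(n)n+\theta(n)$ for $n\gg 0$, where $\gamma$ and $\theta$ are periodic functions of $n$. I will treat the leading coefficient and the lower-order term separately.

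First I would show that $\gamma(n)$ is eventually constant. Since $\theta(n)$ is periodic, it is bounded, so
\[
\gamma(n)=\frac{\reg(I^{(n)})}{n}-\frac{\theta(n)}{n}
\]
tends to the limit $\gamma:=\lim_{n\to\infty}\reg(I^{(n)})/n$, which exists by Corollary \ref{limReg} and Remark \ref{RemWC}. A periodic sequence that converges is eventually constant, so $\gamma(n)=\gamma$ for $n\gg 0$.

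Next I would bound $\theta(n)$. The key observation is that in the proof of Theorem \ref{limAinv} the limit $\alpha_i:=\lim_{n\to\infty} a_i(R/I^{(n)})/n$ is actually the supremum of $\{a_i(R/I^{(n)})/n\}_{n\geq 1}$. In particular, $a_i(R/I^{(n)})\leq n\alpha_i$ for every $n\geq 1$. Combining with $\gamma=\max_i\alpha_i$ (which is the content of Corollary \ref{limReg}) one gets
\[
\reg(R/I^{(n)})=\max_{0\leq i\leq\dim R/I}\bigl(a_i(R/I^{(n)})+i\bigr)\leq n\gamma+\dim(R/I)
\]
for every $n\geq 1$. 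Using $\reg(I^{(n)})=\reg(R/I^{(n)})+1$, this yields $\gamma n+\theta(n)\leq \gamma n+\dim(R/I)+1$ for $n\gg 0$, hence $\theta(n)\leq \dim(R/I)+1$.

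The main conceptual point is noticing that the argument in Theorem \ref{limAinv} gives the sharper supremum property (not merely convergence), which provides the one-sided comparison $a_i(R/I^{(n)})\leq n\alpha_i$ needed for the bound on $\theta(n)$; once this is in hand, the rest is a clean bookkeeping step combining the max formula for regularity with periodicity of $\gamma(n)$. There is essentially no technical obstacle beyond making sure the eventual equality of $\reg(I^{(n)})$ with the quasi-polynomial, together with the periodicity, is invoked at the right place.
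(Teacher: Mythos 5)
Your proposal is correct and follows essentially the same route as the paper: both arguments get the eventual constancy of $\gamma(n)$ from the existence of the limit in Corollary \ref{limReg}, and both derive the bound on $\theta(n)$ from the fact that the limit in Theorem \ref{limAinv} is a supremum, so that $a_i(R/I^{(n)})\ls \gamma n$ and hence $\reg(R/I^{(n)})+1=\max_i\{a_i(R/I^{(n)})+i\}+1\ls \gamma n+\dim(R/I)+1$. Your write-up merely makes explicit the step (periodic convergent sequences are eventually constant) that the paper leaves implicit.
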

\begin{proof}
 From Corollary \ref{limReg} it follows that $\gamma(n)$ must be equal to $\gamma:=\sup\{\frac{a_i(R/I^{(n)})}{n}\}$ for $n\gg 0$. 
For the second claim, we observe that
$$
\gamma n + \theta(n)=\reg(R/I^{(n)})+1=  \max\{a_i(R/I^{(n)})+i)\}+1\ls \gamma n+\dim(R/I)+1.
$$
for $n\gg 0$. We conclude that  $\theta(n)\ls \dim(R/I)+1$ for  every $n\gg 0$.
\end{proof}

The previous results, together with results for matroids \cite{RegSymbMatroit} and low dimension \cite{RegSymbDim2}, motivated Minh and  Trung \cite{RegSymbMatroit} to ask wheather  $\reg(R/I^{(n)})$  is a linear polynomial for $n\gg 0$ for square-free monomial ideals. We note that very recently a counter-example to this question was found by Dung,  Hien,  Nguyen, and  Trung \cite[Remark 5.16]{DHNT}.

It is a classical result that for any homogeneous ideal $I$,  $\reg(I^n)$ is a linear function $b(I)n+c(I)$ for $n\gg 0$ \cite{CHT,KV}. In general, not much is known about the invariant $c(I)$ besides the fact that it is non-negative \cite[Corollary 3.3]{trung2005asymptotic}. 
Corollary \ref{CorQP} provides an upper bound for $c(I)$ for a wide family of square-free monomial ideals.

\begin{cor}
Let $I$ be a square-free monomial ideal such that 
$I^n=I^{(n)}$ for every $n\gg 0$, then $c(I)\ls \dim (R/I)+1$. In particular, this holds for bipartite edge ideals.
\end{cor}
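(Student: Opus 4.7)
The plan is to compare the two quasi-linear descriptions of the regularity of the powers: the one coming from Corollary \ref{CorQP} for $\reg(I^{(n)})$, and the classical one of Cutkosky--Herzog--Trung and Kodiyalam for $\reg(I^n)$. Under the hypothesis $I^n=I^{(n)}$ for $n\gg 0$, these two functions agree asymptotically, and the upper bound $\theta(n)\ls \dim(R/I)+1$ supplied by Corollary \ref{CorQP} will transfer directly to the constant $c(I)$.

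More precisely, I would first invoke Corollary \ref{CorQP} (together with $\reg(I^{(n)})=\reg(R/I^{(n)})+1$) to write
\begin{equation*}
\reg(I^{(n)})=\gamma\, n+\theta(n)\qquad \text{for } n\gg 0,
\end{equation*}
where $\gamma$ is a constant and $\theta(n)\ls \dim(R/I)+1$ is a periodic function. Next, by hypothesis there exists $n_0$ such that $I^n=I^{(n)}$ for every $n\gs n_0$; hence $\reg(I^n)=\gamma\, n+\theta(n)$ for $n\gs n_0$. On the other hand, the classical asymptotic linearity of the regularity of ordinary powers gives $\reg(I^n)=b(I)n+c(I)$ for $n\gg 0$. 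Comparing these two expressions on a common tail forces $b(I)=\gamma$ and $\theta(n)=c(I)$ for all sufficiently large $n$ (the periodic function $\theta$ must be eventually constant since it equals a constant along an arithmetic progression for every residue class). Combined with the bound from Corollary \ref{CorQP}, this yields $c(I)\ls \dim(R/I)+1$.

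For the ``In particular'' clause I would cite the known fact that if $I$ is the edge ideal of a bipartite graph, then $I^n=I^{(n)}$ for every $n\gs 1$ (this is due to Simis--Vasconcelos--Villarreal, and is also a direct consequence of K\"onig's theorem for bipartite graphs, which guarantees that bipartite edge ideals are packed and hence satisfy the equality of ordinary and symbolic powers in the direction already known for Conjecture \ref{ConjPackingProb}). This places bipartite edge ideals in the hypothesis of the corollary, so the bound on $c(I)$ applies.

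There is no real obstacle here; the one point that deserves a sentence of care is the passage from ``$\gamma n+\theta(n)=b(I)n+c(I)$ for $n\gg 0$'' to ``$\theta(n)=c(I)$ is constant,'' which follows because a periodic function that equals a linear function on a cofinite set must itself be eventually constant. Everything else is a direct application of the results already established in the paper together with a standard citation for bipartite edge ideals.
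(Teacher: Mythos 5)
Your argument is correct and takes essentially the same route as the paper, whose proof is a one-line citation of Corollary \ref{CorQP} together with the hypothesis, plus the Simis--Vasconcelos--Villarreal result for bipartite edge ideals; you have simply made explicit the comparison of the two asymptotic linear descriptions of $\reg(I^n)$ that the paper leaves implicit. One small caveat: your parenthetical alternative justification for the bipartite case reverses the known direction of Conjecture \ref{ConjPackingProb} (the established implication is that equality of ordinary and symbolic powers implies packed, not that packed implies equality), so you should rely on the Simis--Vasconcelos--Villarreal citation alone for that step.
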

\begin{proof}
 The result follows by the assumption and Corollary \ref{CorQP}.  The case of edge ideals follows because 
 they satisfy $I^n=I^{(n)}$ for every $n\gg 0$ \cite[Theorem 5.9]{SVV}.
\end{proof}

\section{Associated Graded Algebras and Equality of Symbolic and Ordinary Powers}

In this section, we study the graded algebras defined in Definition \ref{blowup}.
This is in order to prove Theorem \ref{MainAllPowMon}. Our strategy is the following. We first show that the symbolic Rees and associated graded algebras split from their rings of $m$-roots in Theorem \ref{ThmGradedSpl}. Then in Theorem \ref{MainSQfree}, we characterize the equality of symbolic and ordinary powers in terms of this splitting. Finally, we use this characterization to prove Theorem \ref{MainAllPowMon}. We start with introducing rings of $m$-roots for the Rees and associated graded algebras.

 \begin{notation}\label{notBlw}
For $m\in \ZZ_{>0}$, we set  
$$\R(I)^{1/m}:=  \oplus_{n\gs 0} I^{n/m} t^{n/m}\subseteq A^{1/m}$$ 
and 
 $$\R^s(I)^{1/m}:=  \oplus_{n\gs 0} (I^{(n)})^{1/m} t^{n/m}\subseteq A^{1/m}.$$ 
 We consider the ideals 
$$\cJ(I)=\oplus_{n\gs 0} I^{n+1}t^n\subseteq \R(I)$$
 and
 $$\cJ^s(I)=\oplus_{n\gs 0} I^{(n+1)}t^n\subseteq \R^s(I).$$
 \end{notation}

A classical result states that $\gr_I(R)$ is reduced if and only if
$I^n=I^{(n)}$ for every $n\in\ZZ_{>0}$ \cite[Corollary 1.6]{herzog2008standard}. As a consequence of this result, and its proof, one has that $\cJ^s(I)$ is a radical ideal. We set 
$$\gr^s_{I} (R)^{1/m} = (\R^s(I)/\cJ^s(I))^{1/m}.$$

\begin{remark}
\

\begin{enumerate}
\item By Remark \ref{inclusionSymbol} we have the inclusion  
$$\R^s(I)=\oplus_{n\gs 0}  I^{(n)} t^n \subseteq \oplus_{n\gs 0}  \left( I^{(n)}\right)^{1/m} t^{n/m}= \R^s(I)^{1/m}.$$

\item Since $I\subseteq R$ is monomial, we have that $I^{n}\subseteq (I^{nm})^{1/m}$. 
Then $$\R(I)=\oplus_{n\gs 0}  I^n t^n \subseteq \oplus_{n\gs 0}  I^{n/m} t^{n/m}=\R(I)^{1/m}.$$
\end{enumerate}
\end{remark}

 \begin{thm}\label{ThmGradedSpl}
Let $I$  be a square-free monomial ideal. Then the maps induced by the observation  
in Remark \ref{inclusionSymbol}
  $$\R^s(I)\to \R^s(I)^{1/m}\quad\text{ and  }\quad\gr^s_{I} (R)\to \gr^s_{I} (R)^{1/m}$$ split for every $m\in \ZZ_{>0}$.
 \end{thm}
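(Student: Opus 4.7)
The plan is to take the splittings to be restrictions of the $A$-analog $\Phi^A_m\colon A^{1/m}\to A$ of the map from Definition \ref{defSplitting}, which sends a monomial $\fx^{\fa/m}t^{b/m}$ to itself if all of $a_1,\ldots,a_d,b$ are divisible by $m$ and to $0$ otherwise. I would first verify that $\Phi^A_m$ restricts to an $R$-linear (in fact $\R^s(I)$-linear) map $\R^s(I)^{1/m}\to \R^s(I)$ and that this restriction is the identity on $\R^s(I)$, which yields the first splitting. The second splitting would then be obtained by descent after introducing an appropriate ideal $\cJ^s(I)^{1/m}\subseteq \R^s(I)^{1/m}$ whose quotient is $\gr^s_I(R)^{1/m}$.

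For the restriction, a $k$-basis monomial of $\R^s(I)^{1/m}$ has the form $\fx^{\fa/m}t^{n/m}$ with $\fx^{\fa/m}\in (I^{(n)})^{1/m}$. Its image under $\Phi^A_m$ is nonzero only when $m\mid n$ and $m\mid a_i$ for all $i$. Writing $n=mN$ and $\fa=m\fb$, the image equals $\fx^{\fb}t^N$, so I need $\fx^{\fb}\in I^{(N)}$. This is exactly Lemma \ref{LemmaStable} applied with $j=m$, which gives $\Phi^R_m\big((I^{(Nm)})^{1/m}\big)=I^{(N)}$. The identity statement on $\R^s(I)$ is immediate: under the inclusion from Remark \ref{inclusionSymbol}, an element $ft^n\in I^{(n)}t^n$ is regarded in $A^{1/m}$ as $ft^{nm/m}$ with variable exponents and $t$-exponent both in $\ZZ_{\gs 0}$, so $\Phi^A_m$ fixes it.

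For the associated graded statement, set $\cJ^s(I)^{1/m}:=\oplus_{n\gs 0}(I^{(n+1)})^{1/m}t^{n/m}\subseteq \R^s(I)^{1/m}$, which gives $\R^s(I)^{1/m}/\cJ^s(I)^{1/m}=\gr^s_I(R)^{1/m}$. Two compatibilities need checking. First, the inclusion $\R^s(I)\hookrightarrow \R^s(I)^{1/m}$ sends $\cJ^s(I)$ into $\cJ^s(I)^{1/m}$, so it descends to the map in the statement; this is witnessed by the chain $I^{(n+1)}\subseteq (I^{((n+1)m)})^{1/m}\subseteq (I^{(nm+1)})^{1/m}$, the first containment via Remark \ref{inclusionSymbol} and the second via monotonicity of symbolic powers. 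Second, $\Phi^A_m$ sends $\cJ^s(I)^{1/m}$ into $\cJ^s(I)$; by the same monomial analysis as above, this reduces to $\Phi^R_m\big((I^{(Nm+1)})^{1/m}\big)\subseteq I^{(N+1)}$, which is precisely Lemma \ref{LemmaStable} with $j=1$.

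The main obstacle is purely bookkeeping: identifying $\cJ^s(I)^{1/m}$ with the correct grading so that its quotient really is the claimed $\gr^s_I(R)^{1/m}$ and so that the inclusion of Rees algebras restricts to the inclusion of the ideals $\cJ^s$. Once this alignment is in place, both splittings reduce to a direct appeal to Lemma \ref{LemmaStable} applied with $j=m$ and $j=1$, respectively, with no further arithmetic needed.
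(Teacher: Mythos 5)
Your proposal is correct and follows essentially the same route as the paper: the splitting $\varphi$ constructed there is exactly the restriction of $\Phi^A_m$ you describe (it kills the graded pieces with $m\nmid n$ and applies $\Phi^R_m$ on the rest), and both arguments reduce to Lemma \ref{LemmaStable} with $j=m$ for the Rees algebra and $j=1$ for compatibility with $\cJ^s(I)$. You are slightly more explicit than the paper about verifying that the inclusion of Rees algebras descends to the associated graded quotients, but the content is identical.
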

 \begin{proof}
Fix $m\in\ZZ_{>0}$ and let $\Phi^R_m$ be the splitting in  Definition \ref{defSplitting}. 
We define $$\varphi:\R^s(I)^{1/m}\to  \R^s(I)$$ to be the homogeneous morphism of $\R^s(I)$-modules  induced  by
 $\varphi(r^{1/m} t^{n/m}) =\Phi^R_m(r^{1/m})t^{n/m}$ if $m$ divides $n$, and $\varphi(r^{1/m} t^{n/m}) =0$ otherwise. 
The map $\varphi$ is well-defined because
$$
\Phi^R_m \big(( I^{((n+1)m)} )^{1/m}\big)  \subseteq   I^{(n+1)} 
$$
for every $n\gs 0$ by Lemma \ref{LemmaStable}, and it is $\R^s(I)$-linear since $\Phi^R_m$ is $R$-linear.
If $r\in I^{(n)}\subseteq ( I^{(nm)})^{1/m}$, then $\varphi(rt^{nm/m})=\Phi^R_m(r) t^n= rt^n$ because $\Phi^R_m$ is 
a splitting. We conclude that $\varphi$ is also a splitting.

Consider the ideal $\cJ=\cJ^s(I)$ as in Notation \ref{notBlw}. By Lemma \ref{LemmaStable}
  we obtain
$$\varphi\big(( I^{(nm+1)})^{1/m} t^{nm/m}\big)\subseteq  I^{(n+1)} t^n$$ for every $n\gs 0$.
Therefore, $\varphi(\cJ^{1/m})\subseteq \cJ$, i.e.,  $\cJ$ is compatible with
$\varphi$. This induces a splitting $\overline{\varphi}: (\R^s(I)/\cJ)^{1/m} \to \R^s(I)/\cJ$. The conclusion follows. 
 \end{proof}

Theorem \ref{ThmGradedSpl} has the following consequence if the field $k$ has positive characteristic.
 
 \begin{cor}\label{Fpure}
 Let $I$ be a square-free monomial ideal.
If  $k$ is a perfect field of  prime characteristic $p$, then $\R^s(I)$ and $\gr_I^s(R)$ are $F$-pure. 
\end{cor}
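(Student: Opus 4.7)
The plan is to obtain this directly from Theorem \ref{ThmGradedSpl} with $m=p$, after identifying the paper's ring $S^{1/p}$ with the Frobenius $p$-th root ring of $S$ for $S\in\{\R^s(I),\gr^s_I(R)\}$. First I would observe that both algebras are reduced: $\R^s(I)$ is a subring of the polynomial ring $A$ and hence a domain, while $\gr^s_I(R)=\R^s(I)/\cJ^s(I)$ is reduced because $\cJ^s(I)$ is a radical ideal, as recalled in the discussion just before Theorem \ref{ThmGradedSpl}. For any reduced ring $S$, an $S$-linear splitting of the inclusion $S\hookrightarrow S^{1/p}$ into its ring of $p$-th roots is enough to conclude $F$-purity, since this inclusion agrees (up to the ring isomorphism $s \leftrightarrow s^{1/p}$) with the Frobenius map $F\colon S\to S$, and a split injection is pure.

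The remaining step is to verify the identification of rings of $p$-th roots. For a monomial ideal $J\subseteq R$ with $k$ perfect of characteristic $p$, the equality $J^{1/p}=\{g\in R^{1/p} : g^p\in J\}$ holds: writing $g=\sum c_\alpha x^{\alpha/p}$, the freshman's dream gives $g^p=\sum c_\alpha^p x^\alpha$, and since $k=k^{1/p}$, membership of this polynomial in the monomial ideal $J$ is equivalent to membership of $g$ in the ideal of $R^{1/p}$ generated by the $p$-th roots of the monomial generators of $J$. Applying this degree by degree to $s=\sum c_{\alpha,N}\, x^{\alpha/p}t^{N/p}\in A^{1/p}$, one obtains $s^p=\sum c_{\alpha,N}^p\, x^\alpha t^N$, and $s^p\in\R^s(I)$ if and only if $s\in\bigoplus_N (I^{(N)})^{1/p}\, t^{N/p}=\R^s(I)^{1/p}$. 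The analogous identification for $\gr^s_I(R)^{1/p}$ follows by the same argument modulo $\cJ^s(I)$.

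With these identifications in place, Theorem \ref{ThmGradedSpl} applied with $m=p$ supplies $S$-linear retractions of the inclusions $\R^s(I)\hookrightarrow \R^s(I)^{1/p}$ and $\gr^s_I(R)\hookrightarrow \gr^s_I(R)^{1/p}$, which by the reduction above yields $F$-purity of both algebras. The only delicate point is the identification of the paper's $p$-th root rings with Frobenius $p$-th roots; once this is seen to be a consequence of the perfectness of $k$ combined with the monomial structure of the ideals $I^{(n)}$, the corollary becomes an immediate consequence of Theorem \ref{ThmGradedSpl}.
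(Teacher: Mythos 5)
Your proposal is correct and follows essentially the same route as the paper: identify $\R^s(I)^{1/p}$ and $\gr^s_I(R)^{1/p}$ with the genuine Frobenius $p$-th root rings using the perfectness of $k$ and the reducedness coming from $\cJ^s(I)$ being radical, then invoke Theorem \ref{ThmGradedSpl} with $m=p$. The only cosmetic difference is that you use the elementary implication ``split $\Rightarrow$ pure'' directly, whereas the paper cites the Hochster--Roberts equivalence of $F$-purity and $F$-splitting for $F$-finite rings; both are valid.
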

\begin{proof}
We note that the rings involved are $F$-finite. Then, they are $F$-pure if and only if they are $F$-split \cite[Corollary 5.3]{HoRo}.
Since $k$ is perfect, we have that $R^{1/p}$ and $\R^s(I)^{1/p}$ correspond to the rings of $p$-roots of $R$ and  $\R^s(I)$ respectively.  In addition, $(\cJ^s(I))^{1/p}$ is the ideal of $\R^s(I)^{1/p}$ that corresponds to 
$\cJ^s(I)$. Since  $\cJ^s(I)$ is radical,  $\gr_I^s(R)$ is a reduced ring. Then,  $\gr^s_{I} (R)^{1/p}$
corresponds to the ring of $p$-roots of $\gr^s_{I} (R)$.
Then, the result follows from  Theorem \ref{ThmGradedSpl} with $m=p$.
\end{proof}

The following Theorem provides necessary and sufficient conditions for the equality of ordinary and symbolic powers of square-free monomial ideals.
 
  \begin{thm}\label{MainSQfree}
Let $I$ be a square-free  monomial ideal.
Then the following are equivalent.
\begin{enumerate}
\item  $\Phi_m^R\big(( I^{nm+1})^{1/m}\big)\subseteq I^{n+1}$ for every $m\in\ZZ_{>0}$ and $n\in \ZZ_{\gs 0}$.
\item  $\Phi_m^R\big(( I^{nm+1})^{1/m}\big)\subseteq I^{n+1}$ for some  $m\in\ZZ_{>1}$ and every $n\in \ZZ_{\gs 0}$.
\item $I^n=I^{(n)}$ for every $n\in \ZZ_{>0}.$
\end{enumerate}
 \end{thm}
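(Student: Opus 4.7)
The plan is to prove $(1) \Rightarrow (2) \Rightarrow (3) \Rightarrow (1)$. The implication $(1) \Rightarrow (2)$ is immediate. For $(3) \Rightarrow (1)$, under the hypothesis $I^n = I^{(n)}$ we have $(I^{nm+1})^{1/m} = (I^{(nm+1)})^{1/m}$, and Lemma \ref{LemmaStable} gives $\Phi_m^R\bigl((I^{nm+1})^{1/m}\bigr) = I^{(n+1)} = I^{n+1}$.

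The substance is $(2) \Rightarrow (3)$. My plan is to show that $(2)$ forces the associated graded algebra $\gr_I(R)$ to be reduced; the classical equivalence recalled just after Definition \ref{blowup}, namely $\gr_I(R)$ reduced if and only if $I^n = I^{(n)}$ for every $n$, then delivers $(3)$. The first step is to unpack $(2)$ monomially: inspecting the definition of $\Phi_m^R$, a monomial $\fx^\fc \in R$ lies in $\Phi_m^R\bigl((I^{nm+1})^{1/m}\bigr)$ if and only if $\fx^{m\fc} \in I^{nm+1}$, so $(2)$ is equivalent to the implication $\fx^{m\fc} \in I^{nm+1} \Rightarrow \fx^\fc \in I^{n+1}$ for every monomial $\fx^\fc$ and every $n \gs 0$. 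Writing $v_I(\fx^\fc) := \max\{k : \fx^\fc \in I^k\}$ for the $I$-adic order of a monomial and combining with the automatic inequality $v_I(\fx^{m\fc}) \gs m\, v_I(\fx^\fc)$, this upgrades to the sharp identity $v_I(\fx^{m\fc}) = m\, v_I(\fx^\fc)$ for every monomial, which iterates to $v_I(\fx^{m^k\fc}) = m^k v_I(\fx^\fc)$ for every $k \gs 1$.

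Consequently, every nonzero monomial class $[\fx^\fc] \in I^n/I^{n+1}$ in $\gr_I(R)$ (where $n = v_I(\fx^\fc)$) satisfies $[\fx^\fc]^{m^k} = [\fx^{m^k \fc}] \neq 0$ for all $k$, so no monomial class of $\gr_I(R)$ is nilpotent. To upgrade this to reducedness of $\gr_I(R)$, I exploit the $\ZZ^d$-grading inherited from the monomial structure of $R$. Given a nonzero nilpotent $a \in \gr_I(R)$ with $a^N = 0$, I first replace $N$ by $m^k$ for some $m^k \gs N$, then pass to the lowest $\NN$-homogeneous component of $a$ (which must itself be annihilated by the $N$-th power) to reduce to the case where $a$ is $\NN$-homogeneous, and finally expand $a = \sum_\fc c_\fc [\fx^\fc]$ and let $\fc_0$ be the lex-largest exponent in the support of $a$. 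A componentwise argument using lex-maximality shows that the only tuple $(\fc_1, \ldots, \fc_N)$ of support exponents with $\fc_1 + \cdots + \fc_N = N\fc_0$ is the constant one $(\fc_0, \ldots, \fc_0)$, so the $\ZZ^d$-degree-$N\fc_0$ piece of $a^N$ equals $c_{\fc_0}^N [\fx^{N \fc_0}]$, which is nonzero by the previous paragraph. This contradicts $a^N = 0$, hence $\gr_I(R)$ is reduced.

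The main obstacle is this last step: upgrading non-nilpotency of monomial classes to reducedness of the whole associated graded ring, which requires the $\ZZ^d$-grading and the lex-maximality analysis to reduce general nilpotents to monomial ones. Everything else---the monomial translation of $(2)$, the iteration giving $v_I(\fx^{m^k\fc}) = m^k v_I(\fx^\fc)$, and the invocation of the classical characterization of $I^n = I^{(n)}$ via reducedness of $\gr_I(R)$---is direct.
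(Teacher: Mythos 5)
Your proposal is correct and follows essentially the same route as the paper: both deduce from $(2)$ that no monomial class in $\gr_I(R)$ is nilpotent (the paper phrases this as a decreasing induction showing the monomial ideal $\cJ(I)\subseteq\R(I)$ is radical), conclude that $\gr_I(R)$ is reduced, and invoke the classical equivalence with $I^n=I^{(n)}$ for all $n$; the implication $(3)\Rightarrow(1)$ is handled identically via Lemma \ref{LemmaStable}. The only difference is cosmetic: your explicit lex-maximality argument in the $\NN^d$-graded ring $\gr_I(R)$ replaces the paper's implicit appeal to the fact that the radical of a monomial ideal is again a monomial ideal.
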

 \begin{proof}

Clearly  $(1)$ implies  $(2)$. 
We now assume $(2)$ and prove $(3)$.
We first prove that  $\cJ(I)$ is a radical ideal. 
Let $f\in I^n$ be a monomial such that $ft^n\in \sqrt{\cJ(I)}$ then there exists and integer $e\gs 0$ such that $(f t^{n})^{m^{e}}\in\cJ(I)$. We observe that 
 $f^{m^e}\in I^{nm^e+1}$. 
 We note that the assumption in $(2)$ implies $\Phi_{m}^R\big(( I^{nm^e+1})^{1/m}\big)\subseteq I^{nm^{e-1}+1}$.
 Therefore,  
 \begin{align*}
  f^{m^{e-1}}t^{nm^{e-1}} =\Phi_{m}^A((f^{m^e}t^{nm^e})^{1/m})) 
 &\in \Phi_{m}^A(( I^{nm^e+1} t^{nm^e})^{1/m})\\
 & = \Phi_{m}^R(( I^{nm^e+1} )^{1/m})t^{nm^{e-1}}\subseteq I^{nm^{e-1}+1}t^{nm^{e-1}}\subseteq \cJ(I).
 \end{align*}
A decreasing induction on $e$ shows $ft^n\in  \cJ(I)$.
Then, $\gr_I(R)=\R(I)/\cJ(I)$ is reduced.
As a consequence, $I^n=I^{(n)}$ for every $n\in\ZZ_{>0}$ \cite[Corollary 1.6]{herzog2008standard}.

Now, we assume that $(3)$.  By Lemma \ref{LemmaStable}, we have $\Phi_m^R\big(( I^{nm+1})^{1/m}\big)\subseteq I^{n+1}$ for every $n,\, m\in \ZZ_{>0}$, therefore $(1)$ follows.
 \end{proof}

We now state a  problem due to Huneke which was brought to our attention by  H\`a at the BIRS-CMO workshop on {\it Ordinary and Symbolic Powers of Ideals} at  Casa Matem\'atica Oaxaca in the summer of 2017. 
 
\begin{problem}[Huneke]\label{ProbHa}
Let $I$ be a square-free monomial ideal.
Find a number $N\in\ZZ_{> 0}$, in terms of $I$, such that $I^n=I^{(n)}$ for every $n\ls N$ implies 
 $I^n=I^{(n)}$ for every $n\in\ZZ_{>0}$
\end{problem}

 This problem is strongly related to Conforti-Cornu\'ejols  (Conjecture \ref{ConjPackingProb}). In fact, if $N=\height(I)$ satisfies the conclusion of Problem \ref{ProbHa}, then Conjecture \ref{ConjPackingProb}  follows \cite[Remark 4.19]{SurveySymbPowers}. H\`a  also asked for an optimal value for $N$. In Example \ref{ExSharp}, we prove that our bound is sharp.
 
 In general $N=\lfloor\frac{(d+1)^{(d+3)/2}}{2^d}\rfloor$ works for any $I$ \cite[Theorem 5.6]{herzoga2007symbolic}; and if one assumes  Conjecture \ref{ConjPackingProb}, then  $N=\lceil \frac{d+1}{2}\rceil$ would also work \cite[Remark 4.8]{HaMorey}. 
 As a consequence of our methods, in Theorem \ref{allPowMon} we provide a sharper  
  $N$ in terms of the number of generators of $I$. For the proof of this result, we need the following well-known lemma. We include its proof for the sake of completeness.
 
  We denote by $\mu(I)$ the minimal number of generators of $I$. If $I$ is generated by the monomials $\fx^{\fa_1},\ldots, \fx^{\fa_u}$, we denote by $I^{[m]}$ the ideal generated by $\fx^{m\fa_1},\ldots, \fx^{m\fa_u}$.

\begin{lemma}\label{Lemma Obs pe u(I)}
Let $I$ be a monomial ideal.
If $r\gs \mu(I)( m-1)+1$, 
then  $I^r= I^{r- m} I^{[m]}$.
\end{lemma}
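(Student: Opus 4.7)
The plan is to prove this by a straightforward pigeonhole argument applied to monomial generators. Let $\fx^{\fa_1},\ldots,\fx^{\fa_u}$ be a minimal generating set of $I$, so $u=\mu(I)$.

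First I would handle the easy containment $I^{r-m}I^{[m]}\subseteq I^r$. This is immediate since $\fx^{m\fa_k}=(\fx^{\fa_k})^m\in I^m$, so $I^{[m]}\subseteq I^m$ and hence $I^{r-m}I^{[m]}\subseteq I^{r-m}I^m=I^r$.

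For the reverse containment, since both ideals are monomial it suffices to show each monomial generator of $I^r$ lies in $I^{r-m}I^{[m]}$. Any such generator has the form
\[
\fx^{\fa_{i_1}}\fx^{\fa_{i_2}}\cdots\fx^{\fa_{i_r}}
\]
for some indices $i_1,\ldots,i_r\in\{1,\ldots,u\}$ (with repetition allowed). By the pigeonhole principle, since we are distributing $r\geq u(m-1)+1$ factors among $u$ possible generators, at least one generator $\fx^{\fa_k}$ must appear among the $\fx^{\fa_{i_j}}$ at least $m$ times; otherwise each appears at most $m-1$ times and the total would be at most $u(m-1)<r$.

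Then I would extract $m$ copies of this $\fx^{\fa_k}$, writing the product as $\fx^{m\fa_k}\cdot\bigl(\text{product of the remaining }r-m\text{ factors}\bigr)$. The first factor lies in $I^{[m]}$ by definition, and the remaining product of $r-m$ generators (chosen from $\fx^{\fa_1},\ldots,\fx^{\fa_u}$) lies in $I^{r-m}$. Hence the original generator is in $I^{r-m}I^{[m]}$, which completes the proof. There is no real obstacle here; the only subtle point is recognizing that the bound $r\geq u(m-1)+1$ is precisely the threshold that forces the pigeonhole to guarantee an $m$-fold repetition.
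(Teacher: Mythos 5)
Your proof is correct and follows essentially the same route as the paper's: both arguments reduce to monomial generators of $I^r$, apply the pigeonhole principle to the exponent vector $(\alpha_1,\ldots,\alpha_u)$ summing to $r\geq u(m-1)+1$ to find some generator occurring at least $m$ times, factor out $\fx^{m\fa_k}\in I^{[m]}$, and obtain the easy reverse containment from $I^{[m]}\subseteq I^m$. No substantive differences.
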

\begin{proof}
Let $u=\mu(I)$ and $\fx^{\fa_1},\ldots, \fx^{\fa_u}$  a minimal set of generators of
$I.$
Let $\alpha_1,\ldots,\alpha_u$ be natural numbers such that 
$\alpha_1+\ldots+\alpha_u=r,$ 
then by assumption there must exist $\alpha_i$ such that $\alpha_i\gs m.$
Therefore, 
$$
\fx^{\alpha_1\fa_1}\cdots \fx^{\alpha_u\fa_u}=
\fx^{\alpha_1\fa_1}\cdots \fx^{(\alpha_i-m)\fa_i} \cdots \fx^{\alpha_u\fa_u}\fx^{\alpha_im}
\in I^{r-m}I^{[m]}. 
$$
This shows that $I^r\subseteq I^{r- m} I^{[m]}$.
To obtain the other containment, we observe that $I^{[m]}\subseteq I^m$.
\end{proof}
 
 \begin{thm}\label{allPowMon}
 Let $I$ be a square-free  monomial ideal.
If $I^n=I^{(n)}$ for every $n\ls \lceil\frac{ \mu(I)}{2}\rceil$, then $I^n=I^{(n)}$ for every $n\in\ZZ_{>0}$.
 \end{thm}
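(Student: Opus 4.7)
The plan is to apply Theorem \ref{MainSQfree} with $m=2$: it suffices to show that the containment
\[
\Phi_2^R\bigl((I^{2n+1})^{1/2}\bigr)\subseteq I^{n+1}
\]
holds for every $n\in \ZZ_{\geqslant 0}$. Setting $N:=\lceil \mu(I)/2\rceil$, I would establish this by induction on $n$, with the assumption $I^n=I^{(n)}$ for $n\le N$ feeding the base case and Lemma \ref{Lemma Obs pe u(I)} driving the inductive step.

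For the base case $n\le N-1$, so $n+1\le N$, the plan is to start from the tautological inclusion $I^{2n+1}\subseteq I^{(2n+1)}$, pass to $1/2$-powers, and apply Lemma \ref{LemmaStable} to get
\[
\Phi_2^R\bigl((I^{2n+1})^{1/2}\bigr)\subseteq \Phi_2^R\bigl((I^{(2n+1)})^{1/2}\bigr)=I^{(n+1)}=I^{n+1},
\]
where the last equality uses the hypothesis since $n+1\le N$.

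For the inductive step, I would assume $n\ge N$ and that the desired inclusion holds at $n-1$. Since $2n+1\ge 2N+1\ge \mu(I)+1$, Lemma \ref{Lemma Obs pe u(I)} applied with $m=2$ gives $I^{2n+1}=I^{2n-1}\,I^{[2]}$. Taking $1/2$-powers monomial by monomial yields $(I^{[2]})^{1/2}=I\cdot R^{1/2}$ and therefore
\[
(I^{2n+1})^{1/2}=(I^{2n-1})^{1/2}\cdot I.
\]
Using that $\Phi_2^R$ is $R$-linear and invoking the inductive hypothesis applied to $n-1$, I would conclude
\[
\Phi_2^R\bigl((I^{2n+1})^{1/2}\bigr)=I\cdot \Phi_2^R\bigl((I^{2(n-1)+1})^{1/2}\bigr)\subseteq I\cdot I^{n}=I^{n+1}.
\]
This verifies condition $(2)$ of Theorem \ref{MainSQfree}, which yields $I^n=I^{(n)}$ for all $n$.

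The main subtlety, and the place where care is needed, is matching the threshold $N=\lceil \mu(I)/2\rceil$ coming from the hypothesis with the threshold $\mu(I)+1$ required by Lemma \ref{Lemma Obs pe u(I)} for $m=2$; choosing $m=2$ (rather than a larger $m$) is precisely what makes the two thresholds align, and is also what makes the recursion drop the power $2n+1$ down to $2(n-1)+1$ so that a single layer of induction closes the argument.
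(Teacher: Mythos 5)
Your proposal is correct and follows essentially the same route as the paper's proof: reduce via Theorem \ref{MainSQfree} (with $m=2$) to the containment $\Phi_2^R\bigl((I^{2n+1})^{1/2}\bigr)\subseteq I^{n+1}$, settle $n<\lceil\mu(I)/2\rceil$ using the hypothesis together with Lemma \ref{LemmaStable}, and run the induction for larger $n$ via the factorization $I^{2n+1}=I^{2n-1}I^{[2]}$ from Lemma \ref{Lemma Obs pe u(I)}. The only difference is that you spell out the intermediate identity $(I^{[2]})^{1/2}=I\cdot R^{1/2}$, which the paper leaves implicit.
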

\begin{proof}
%
We show that $$\Phi_{2}^R\big(( I^{2n+1})^{1/2}\big)\subseteq I^{n+1}$$ for every $n\gs 0$  and then the result follows by Theorem \ref{MainSQfree}, $(2)\Rightarrow (3)$. By assumption and Lemma \ref{LemmaStable} this inclusion holds for $n< \lceil\frac{\mu(I)}{2}\rceil$, as for these values  $I^{(n+1)}=I^{n+1}$. We fix $n\gs  \lceil\frac{\mu(I)}{2}\rceil$. Then $$2n+1\gs \mu(I)+1$$ 
and hence 
$( I^{2n+1})^{1/2}=( I^{2(n-1)+1})^{1/2}I$ by Lemma \ref{Lemma Obs pe u(I)}. 
Therefore, by induction
$$
\Phi_{2}^R\big(  ( I^{2n+1})^{1/2}   \big)
=\Phi_{2}^R\big(( I^{2(n-1)+1})^{1/2} I   \big)
=\Phi_{2}^R\big(( I^{2(n-1)+1})^{1/2}       \big)I
\subseteq   I^{n}I=I^{n+1},
$$
finishing the proof.
\end{proof} 

We point out that Theorem \ref{allPowMon} relates to an open problem stated by Francisco, H\`a, and Mermin \cite[Problem 5.14(a)]{francisco2013powers}.
The following example shows that Theorem \ref{allPowMon} is sharp.

\begin{example}\label{ExSharp}
Let $R=k[x_1,\ldots, x_{2t-1}]$ for some $t\gs 2$, and let $$I=(x_1x_2,x_2x_3,\ldots, x_{2t-2}x_{2t-1}, x_{2t-1}x_{1}).$$ Then $I^{n}=I^{(n)}$ for every $n<t=\lceil\frac{\mu(I)}{2}\rceil$, whereas $I^{t}\neq I^{(t)}$ \cite[Corollary 4.5]{LamTrung2015}.
\end{example}

\begin{cor}\label{CorFrob}
Let $I$ be a square-free  monomial ideal.
Then 
$I^n=I^{(n)}$  for every $n\in\ZZ_{>0}$ 
if and only if $x_1\cdots x_d I^{2n+1}\subseteq \left( I^{n+1}\right)^{[2]}$ for $n< \frac{\mu(I)}{2}$. 
\end{cor}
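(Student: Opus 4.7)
The plan is to derive Corollary \ref{CorFrob} as a combinatorial rephrasing of Theorems \ref{MainSQfree} and \ref{allPowMon}, by showing that the containment $\Phi_2^R\big((I^{2n+1})^{1/2}\big)\subseteq I^{n+1}$ is equivalent, for each fixed $n\gs 0$, to $x_1\cdots x_d\,I^{2n+1}\subseteq (I^{n+1})^{[2]}$. Conceptually, $\Phi_2^R$ behaves like a trace of Frobenius while the bracket power $(-)^{[2]}$ is the corresponding Frobenius image, so the two conditions should be adjoint to each other in an elementary monomial sense.

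First I would compute $\Phi_2^R(J^{1/2})$ explicitly for a monomial ideal $J\subseteq R$. Combining the definition of $\Phi_2^R$ with the observation that $\fx^{\fc/2}\in J^{1/2}$ iff $\fx^\fc\in J$, one directly obtains
\[
\Phi_2^R(J^{1/2})=\big(\fx^\fd\mid \fx^{2\fd}\in J\big)R.
\]
Specialized to $J=I^{2n+1}$, this identifies the containment $\Phi_2^R\big((I^{2n+1})^{1/2}\big)\subseteq I^{n+1}$ with the monomial condition
\[
(\mathrm{A})\quad \fx^{2\fd}\in I^{2n+1}\ \Longrightarrow\ \fx^\fd\in I^{n+1}.
\]

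Next I would rewrite $x_1\cdots x_d\,I^{2n+1}\subseteq (I^{n+1})^{[2]}$ in monomial terms. Since $(I^{n+1})^{[2]}$ is generated by the componentwise doubles of the generators of $I^{n+1}$, a monomial $\fx^\fb$ lies in $(I^{n+1})^{[2]}$ iff $\fx^{\lfloor\fb/2\rfloor}\in I^{n+1}$. Using $\lfloor(1+a_j)/2\rfloor=\lceil a_j/2\rceil$ componentwise, the containment becomes
\[
(\mathrm{B})\quad \fx^\fa\in I^{2n+1}\ \Longrightarrow\ \fx^{\lceil\fa/2\rceil}\in I^{n+1}.
\]
The equivalence $(\mathrm{A})\Leftrightarrow(\mathrm{B})$ is then immediate: applying $(\mathrm{B})$ to $\fa=2\fd$ recovers $(\mathrm{A})$, and conversely, given any $\fx^\fa\in I^{2n+1}$, the vector $\fd=\lceil\fa/2\rceil$ satisfies $2\fd\gs\fa$, whence $\fx^{2\fd}\in I^{2n+1}$ and $(\mathrm{A})$ forces $\fx^\fd\in I^{n+1}$.

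Finally I would piece everything together. By Theorem \ref{MainSQfree}, $I^n=I^{(n)}$ for every $n\in\ZZ_{>0}$ is equivalent to $(\mathrm{A})$ holding for every $n\gs 0$. The inductive step in the proof of Theorem \ref{allPowMon}, which applies Lemma \ref{Lemma Obs pe u(I)} with $m=2$ once $2n+1\gs\mu(I)+1$, shows moreover that $(\mathrm{A})$ for $n<\lceil\mu(I)/2\rceil$ already forces $(\mathrm{A})$ for every $n\gs 0$. Since the inequalities $n<\mu(I)/2$ and $n<\lceil\mu(I)/2\rceil$ cut out the same set of nonnegative integers, combining these observations with $(\mathrm{A})\Leftrightarrow(\mathrm{B})$ yields the Corollary. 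The main point requiring care is the adjointness computation $(\mathrm{A})\Leftrightarrow(\mathrm{B})$; once that is in place, the rest is a direct application of the earlier results.
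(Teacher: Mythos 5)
Your proof is correct, and the middle step is handled by a genuinely different argument than the paper's. Where you establish the equivalence of $\Phi_2^R\big((I^{2n+1})^{1/2}\big)\subseteq I^{n+1}$ with $x_1\cdots x_d\,I^{2n+1}\subseteq (I^{n+1})^{[2]}$ by the explicit monomial computations $\Phi_2^R(J^{1/2})=\big(\fx^{\fd}\mid \fx^{2\fd}\in J\big)R$ and $\fx^{\fb}\in (I^{n+1})^{[2]}\Leftrightarrow \fx^{\lfloor \fb/2\rfloor}\in I^{n+1}$, the paper instead identifies the generator $\Psi_2$ of $\Hom_R(R^{1/2},R)$ as an $R^{1/2}$-module, observes $\Phi_2^R(-)=\Psi_2\big((x_1\cdots x_d)^{1/2}\cdot -\big)$, and invokes a Fedder-type lemma to convert the $\Psi_2$-containment into the bracket-power containment. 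Your route is more elementary and self-contained (no appeal to the structure of $\Hom_R(R^{1/2},R)$ or to Fedder's criterion, and no need to verify that $\Psi_2$ generates), at the cost of being specific to monomial ideals and to $m=2$; the paper's route explains conceptually why $x_1\cdots x_d$ and the bracket power appear, namely as the Frobenius-trace adjunction, and generalizes to other splittings. Both arguments then conclude identically via Theorem \ref{MainSQfree} and the inductive step in the proof of Theorem \ref{allPowMon}. All the individual verifications in your write-up check out, including the floor/ceiling bookkeeping $\lfloor(1+a_j)/2\rfloor=\lceil a_j/2\rceil$, the passage from $\fa$ to $2\lceil\fa/2\rceil\gs\fa$ using that $I^{2n+1}$ is a monomial ideal, and the observation that $n<\mu(I)/2$ and $n<\lceil\mu(I)/2\rceil$ describe the same integers.
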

\begin{proof}
Let $m\in \ZZ_{>0}$. Since $R^{1/m}$ and $R$ are regular of the same dimension, we have that
 $\Hom_R(R^{1/m}, R)\cong R^{1/m}$ as $R^{1/m}$-modules \cite[Lemma 1.6 (1)]{FedderFputityFsing}. Furthermore,  standard computations show that the map $\Psi_m :R^{1/m}\to R$ induced by 
$$
\Psi_m(\fx^{\fa/m})=
\begin{cases} 
     \fx^{(\fa - (m-1)\f1)/m} & \fa \equiv -\f1\,  (\bmod\, m);\\
      0 & \hbox{otherwise,}
\end{cases}
$$
where $\f1 = (1,\ldots, 1)\in \ZZ^d$, is a generator  of $\Hom_R(R^{1/m}, R)$ as an $R^{1/m}$-module (this is a standard computation in prime characteristic $p$ when $m=p$).
We now  focus on the case $m=2$. We stress that we are not making any assumption on the characteristic of the field.
We note that
$x_1\cdots x_d I^{2n+1}\subseteq \left( I^{n+1}\right)^{[2]}$
if and only if 
$$(x_1\cdots x_d)^{1/2} (I^{2n+1})^{1/2}\subseteq  I^{n+1}R^{1/2},$$
as $R^{1/2}$ is isomorphic to $R$.
In addition, $(x_1\cdots x_d)^{1/2} (I^{2n+1})^{1/2}\subseteq I^{n+1}R^{1/2}$ 
is equivalent to 
$\Psi_2\big((x_1\cdots x_d)^{1/2} (I^{2n+1})^{1/2}\big)\subseteq I^{n+1}$ for every $n\gs 0$
\cite[Lemma 1.6 (2)]{FedderFputityFsing}. 
Since $\Phi_2^R(-)=\Psi_2\big( (x_1\cdots x_d)^{1/2}-\big)$, the result follows from Theorem \ref{MainSQfree} and the proof of Theorem \ref{allPowMon}.
\end{proof}

 We note that Theorem \ref{allPowMon} and Corollary \ref{CorFrob} give an algorithm to verify Conjecture \ref{ConjPackingProb} for a specific ideal.
 \section{Applications to linear optimization}\label{SecOptimization}
 
 In this brief section we translate our result to the context of linear programming. For more on this topic, we refer to  \cite{HaTrung}.

 A {\it clutter} $\C=(V, E)$ is a collection of subsets $E$ of $V=\{v_1,\ldots, v_n\}$ such that every two elements of $E$ are incomparable with respect to inclusion. We denote by $M:=M(\C)$ the $n\times m$ matrix with entries equal to 0 or 1, such that its columns are the incidence vectors of the sets in $E$. Given $\fc\in \ZZ^n_{\gs 0}$, by the Strong Duality Theorem we have the following equality of dual linear programs.
 \begin{equation}\label{dualSystem}
  \min\{\fc\cdot \fx\mid \fx\in \RR^n_{\gs 0},\, M^T\fx\gs \f1_m\}=
 \max\{\f1_m\cdot \fy\mid \fy\in \RR^m_{\gs 0},\, M\fy\ls \fc\},
 \end{equation}
where $\f1_m=(1,\ldots, 1)\in \ZZ^m$. We say that $\C$ {\it packs for $\fc$} if Equation \eqref{dualSystem} holds for integer vectors $\fx \in \ZZ_{\gs 0}^n$ and  $\fy\in \ZZ_{\gs 0}^m$. The clutter $\C$ satisfies the {\it Max-Flow-Min-Cut (MFMC) property}  if it packs for every $\fc\in \ZZ_{\gs 0}^n$.

We now recall a lemma that allows us to rephrase Theorem \ref{allPowMon} in this context. We first set up some notation. Let $\{\fb_1,\ldots, \fb_m\}$ be the column vectors of $M$. We denote by $I$ the ideal of the polynomial ring $R=k[v_1,\cdots, v_n]$ generated by the monomials $\{\fv^{\fb_1},\ldots, \fv^{\fb_m}\}$.  
  
 \begin{lemma}[{\cite[Proposition 3.1]{Trung06}}]\label{translation}
  Set  $$\gamma(\fc)  := \min\{\fc\cdot \fx\mid \fx\in \ZZ^n_{\gs 0},\, M^T\fx\gs \f1_m\}, \quad \textnormal{ and }$$ 
$$ \sigma(\fc) := \max\{\f1_m\cdot \fy\mid \fy\in \ZZ^m_{\gs 0},\, M\fy\ls \fc\}.$$
 Then 
 \begin{enumerate}
 \item[(i)] $\fv^\fc\in I^{(t)}$ if and only if $t\ls \gamma(\fc)$.
\item[(ii)] $\fv^\fc\in I^{t}$ if and only if $t\ls \sigma(\fc)$.
 \end{enumerate}
 \end{lemma}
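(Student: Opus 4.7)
Both parts are independent translations between ideal membership and clutter combinatorics, so I would handle them separately, starting with the easier statement (ii) and then treating (i) via a primary decomposition.

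For (ii), the plan is to unwind $I^t$ directly. A monomial $\fv^\fc$ lies in $I^t$ if and only if it is divisible by some product $\prod_{j=1}^{m}(\fv^{\fb_j})^{y_j}$ with $\fy = (y_1,\ldots,y_m)\in \ZZ^m_{\gs 0}$ and $\f1_m\cdot \fy = t$. Written componentwise, the divisibility condition is exactly $M\fy \ls \fc$. Thus $\fv^\fc\in I^t$ is equivalent to the existence of such a $\fy$ with $\f1_m\cdot\fy = t$. One direction of $\fv^\fc \in I^t\Leftrightarrow \sigma(\fc)\gs t$ is then immediate, and the other is obtained by taking any $\fy$ feasible for $\sigma$ with $\f1_m\cdot\fy\gs t$ and decreasing its coordinates until the sum equals $t$, which only relaxes $M\fy\ls \fc$.

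For (i), I would pass through the primary decomposition. Since $I$ is squarefree monomial, its minimal primes are precisely the monomial primes $P_C := (v_i : i\in C)$ as $C$ ranges over the minimal vertex covers of $\C$. Consequently
$$
I^{(t)} = \bigcap_C P_C^t,
$$
and a direct computation for monomial primes shows $\fv^\fc\in P_C^t$ iff $\sum_{i\in C} c_i\gs t$. Hence $\fv^\fc\in I^{(t)}$ iff $t \ls \min_C \sum_{i\in C} c_i$, the minimum running over all minimal vertex covers of $\C$. It remains to identify this minimum with $\gamma(\fc)$: the indicator vector $\chi_C\in\ZZ^n_{\gs 0}$ of any vertex cover $C$ satisfies $M^T\chi_C \gs \f1_m$ and $\fc\cdot\chi_C = \sum_{i\in C} c_i$, giving $\gamma(\fc)\ls \min_C\sum_{i\in C} c_i$; for the reverse inequality, given any integer feasible $\fx\gs \f0$ the support $C := \{i : x_i \gs 1\}$ is a vertex cover, and since $\fc\gs \f0$ the $\{0,1\}$-truncation $\chi_C$ satisfies $\fc\cdot\chi_C\ls \fc\cdot\fx$, so a minimizer can be taken of this form.

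The main (modest) obstacle is keeping the LP-to-combinatorics dictionary clean: one has to check that integer feasible points for $\gamma$ reduce to $\{0,1\}$-vectors (immediate from $\fc\gs\f0$) and to invoke the standard primary decomposition of a squarefree monomial ideal in terms of vertex covers of $\C$, which is the one external fact being used.
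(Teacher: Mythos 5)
Your proof is correct. Note that the paper does not actually prove this lemma --- it is quoted from Trung's work (\cite[Proposition 3.1]{Trung06}) --- but your argument is the standard one behind that citation: part (ii) is the observation that membership of a monomial in $I^t$ amounts to divisibility by a product of $t$ generators, i.e.\ to an integer feasible point of the dual program with objective value $t$, and part (i) uses the decomposition $I^{(t)}=\bigcap_C P_C^t$ over minimal vertex covers (which the paper itself records in Remark~\ref{inclusionSymbol}) together with the identification of integer feasible points of the covering LP with vertex covers via $0$--$1$ truncation. Both reductions, including the coordinate-decreasing step for $\sigma$ and the truncation step for $\gamma$ (each legitimate because $M$ and $\fc$ are nonnegative), are carried out correctly.
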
 
 
 We are now ready to present the main theorem of this section which is related to previous results in integer programming \cite[Corollary 2.3]{BT82}. 

 \begin{thm}\label{MFMC}
 The clutter 
 $\C$ packs for every $\fc\ls \f1_n\lceil \frac{m}{2}\rceil$ if and only if $\C$ satisfies the MFMC property.
 \end{thm}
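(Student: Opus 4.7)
The plan is to reduce the claim to Theorem \ref{allPowMon} via the dictionary in Lemma \ref{translation}. The implication $(\Leftarrow)$ is immediate from the definition of MFMC. For the implication $(\Rightarrow)$, I will first translate MFMC into an ideal-theoretic statement. Since $\sigma(\fc)\ls\gamma(\fc)$ always holds (the common LP value of \eqref{dualSystem} sits between them), $\C$ packs for $\fc$ if and only if $\sigma(\fc)=\gamma(\fc)$. By Lemma \ref{translation}, this equality for every $\fc\in\ZZ^n_{\gs 0}$ is equivalent to saying $\fv^\fc\in I^{(t)}\Longleftrightarrow \fv^\fc\in I^t$ for every monomial $\fv^\fc$ and every $t\in \ZZ_{>0}$, which in turn is equivalent to $I^t=I^{(t)}$ for every $t\in\ZZ_{>0}$. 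Because the edges of a clutter are pairwise incomparable under inclusion, the monomials $\fv^{\fb_1},\ldots,\fv^{\fb_m}$ form a minimal generating set of $I$, so $\mu(I)=m$. By Theorem \ref{allPowMon}, it then suffices to show that $I^t=I^{(t)}$ for every $t\ls \lceil m/2\rceil$.

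The main technical step will be a bound on minimal monomial generators of symbolic powers: if $\fv^\fc$ is a minimal generator of $I^{(t)}=Q_1^t\cap\cdots\cap Q_s^t$, where $Q_1,\ldots,Q_s$ are the minimal primes of $I$, then $c_i\ls t$ for every $i$. To see this, suppose to the contrary that $c_i\gs t+1$ for some $i$, and set $\fc'=\fc-\fe_i$. For each $Q_j$ containing $v_i$ we have $\sum_{v_\ell\in Q_j}c_\ell\gs c_i\gs t+1$, so $\sum_{v_\ell\in Q_j}c'_\ell\gs t$; for each $Q_j$ not containing $v_i$, the corresponding sum is unchanged and hence still $\gs t$. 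Therefore $\fv^{\fc'}\in Q_j^t$ for every $j$, which means $\fv^{\fc'}\in I^{(t)}$, contradicting the minimality of $\fv^\fc$.

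With this bound in hand I will argue by contrapositive. Assume $I^t\neq I^{(t)}$ for some $t\ls \lceil m/2\rceil$, and pick a monomial $\fv^\fc\in I^{(t)}\setminus I^t$. Replacing $\fv^\fc$ by any minimal generator of $I^{(t)}$ dividing it (which remains outside $I^t$ since $I^t$ is a monomial ideal), we may assume $\fv^\fc$ is a minimal generator of $I^{(t)}$. The bound then gives $\fc\ls t\cdot \f1_n\ls \lceil m/2\rceil\cdot\f1_n$. On the other hand, Lemma \ref{translation}(i) yields $\gamma(\fc)\gs t$, while Lemma \ref{translation}(ii) gives $\sigma(\fc)<t$, so $\C$ fails to pack for this $\fc$, contradicting the hypothesis. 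The step I expect to be the main obstacle is the minimal-generator bound, as this is the place where the constant $\lceil m/2\rceil$ actually enters through the interplay between the symbolic structure and coordinate exponents; everything else in the argument is a routine translation between the combinatorial and ideal-theoretic languages.
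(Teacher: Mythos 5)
Your proposal is correct and follows essentially the same route as the paper: translate packing into $\sigma(\fc)=\gamma(\fc)$ via Lemma \ref{translation}, observe that minimal generators of $I^{(t)}$ divide $\fv^{t\f1_n}$ so the hypothesis applies to them, conclude $I^t=I^{(t)}$ for $t\ls\lceil m/2\rceil$, and invoke Theorem \ref{allPowMon}. The only differences are presentational (you argue the key step by contrapositive, and you spell out the divisibility bound on minimal generators that the paper dismisses as "clear"), so there is nothing to fix.
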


\begin{proof}
By definitions the MFMC property implies the $\C$ packs for every $\fc\in \ZZ_{\gs 0}^m$.  We focus on the other direction.
We assume $\C$ packs for every $\fc\ls \f1_n\lceil \frac{m}{2}\rceil$. Let $\fv^\fc$ be a minimal monomial generator of $I^{(t)}$  for some $1\ls t\ls \lceil\frac{m}{2}\rceil$, then it clearly  divides $\fv^{t\f1_n}$. Moreover, by Lemma \ref{translation}, we have $\gamma(\fc)=t$; this equality holds since reducing a component of $\fc$ by 1, reduces $\gamma(\fc)$ by at most 1. Therefore, by assumption we have $\sigma(\fc) = \gamma(\fc)= t$ 
and then $\fv^\fc\in I^t$ by Lemma  \ref{translation}. We conclude that  $I^{(t)}=I^t$ for $t=1,\ldots, \lceil\frac{m}{2}\rceil=\lceil\frac{\mu(I)}{2}\rceil$. By Theorem \ref{allPowMon} it follows that $I^{(t)}=I^t$ for every $t\gs 1$. Now,  let $\fc\in \ZZ_{\gs 0}^n$  be arbitrary, then $\fv^\fc\in I^{(\gamma(\fc))}=I^{\gamma(\fc)}$ and by Lemma \ref{translation}  we have $\sigma(\fc)\gs \gamma(\fc)$. Since $\sigma(\fc)\ls \gamma(\fc)$ always holds, it follows that $\sigma(\fc)=\gamma(\fc)$ and this finishes the proof.
\end{proof}  




\section*{Acknowledgments}
The authors started this project after participating in the BIRS-CMO workshop on {\it Ordinary and Symbolic Powers of Ideals} Summer of 2017,  Casa Matem\'atica Oaxaca, Mexico, where they learned new open problems on the subject. The authors thank the organizers:  Chris Francisco, Tai H\`a, and Adam Van Tuyl. They also thank Craig Huneke for suggestions that improved  Theorem \ref{allPowMon} and Rafael H. Villarreal for suggesting Remark \ref{RemWC}. The authors would like to thank Hailong Dao, Elo\'isa Grifo,   Tai H\`a, and Robert Walker for suggestions on an earlier draft. Part of this project was completed in the Mathematisches Forschungsinstitut Oberwolfach (MFO) while the first  author was in residence at the institute under the program  {\it Oberwolfach Leibniz Fellows}. The first  author thanks MFO for their hospitality and excellent conditions for conducting research. The second author thanks Jack Jeffries for inspiring conversations. The authors would like to thank the referees for their helpful comments and suggestions which improved this paper.  

\bibliographystyle{plain}
\bibliography{References}

\end{document}